\Crefname{theorem}{Theorem}{Theorems}
\Crefname{proposition}{Proposition}{Propositions}
\Crefname{lemma}{Lemma}{Lemmas}
\Crefname{corollary}{Corollary}{Corollaries}
\Crefname{remark}{Remark}{Remarks}
\Crefname{example}{Example}{Examples}
\Crefname{algorithm}{Algorithm}{Algorithms}
\Crefname{figure}{Figure}{Figures}
\Crefname{equation}{}{}
\theoremstyle{definition}
\newtheorem{definition}{Definition}[section]
\newtheorem{theorem}[definition]{Theorem}
\newtheorem{proposition}[definition]{Proposition}
\newtheorem{lemma}[definition]{Lemma}
\newtheorem{corollary}[definition]{Corollary}
\newtheorem{remark}[definition]{Remark}
\newtheorem{example}[definition]{Example}
\DeclareMathOperator*{\pmin}{p-min}
\DeclareMathOperator{\tropconv}{tconv}
\DeclareMathOperator{\supp}{supp}
\begin{document}

\title{Combinatorial Algorithm for Tropical Linearly Factorized Programming}
\author{Yuki Nishida\thanks{Kyoto Prefectural University (E-mail: y-nishida@kpu.ac.jp)}    }

\date{}

\maketitle

\begin{abstract} 
The tropical semiring is an algebraic system with addition ``$\max$'' and multiplication ``$+$''. 
As well as in conventional algebra, linear programming in the tropical semiring has been developed.
In this study, we introduce a new type of tropical optimization problem, namely, tropical linearly factorized programming.
This problem involves minimizing the objective function given by a product of tropical linear forms divided by a tropical monomial, subject to tropical linear inequality constraints.
As the objective function is equivalent to the dual of the transportation problem, it is convex in the conventional sense but not in the tropical sense, while the feasible set is convex in the tropical sense but not in the conventional sense. 

Our algorithm for tropical linearly factorized programming is based on the descent method.
We first show that a feasible descent direction can be characterized in terms of a specific digraph, called a tangent digraph.
Especially in non-degenerate cases, we present a simplex-like algorithm that updates the tree structure of tangent digraphs iteratively.
Each iteration can be executed in $O(r_A+r_C)$ time, where $r_A$ and $r_C$ are the numbers of finite coefficients in the constraints and objective function, respectively.
For integer instances, our algorithm finds a local optimum in pseudo-polynomial time.
\end{abstract}

\section{Introduction}
The tropical semiring, or max-plus algebra, is a set of numbers $\mathbb{R}_{\max}:= \mathbb{R} \cup \{-\infty\}$ with addition $a \oplus b:= \max(a,b)$ and multiplication $a \otimes b:= a + b$.
In the tropical semiring, subtraction cannot be defined, whereas tropical division is defined by $a \oslash b:= a \otimes (-b)$ if $b \neq -\infty$.
As linear algebra over the tropical semiring is referred to as ``the linear algebra of combinatorics''~\cite {Butkovic2003}, it is closely related to combinatorial optimization problems.
For example, the maximum eigenvalue of a square matrix coincides with the maximum cycle mean in the associated weighted digraph~\cite{Green1979};
the power series of a square matrix is translated into the shortest path problem~\cite{Gondran1975};
the determinant of a square matrix is identical to the optimal assignment in a weighted bipartite graph~\cite{Bapat1995}.

A tropical linear programming (TLP) involves minimizing a tropical linear function of the form $c \otimes x \oplus d$ subject to a tropical linear constraint $A^+ \otimes x \oplus b^+ \geq A^- \otimes x \oplus b^-$ for decision variables $x \in \mathbb{R}_{\max}^n$, where $A^\pm \in \mathbb{R}_{\max}^{m \times n}$, $b^\pm \in \mathbb{R}_{\max}^m$, $c \in \mathbb{R}_{\max}^{1 \times n}$, and $d \in \mathbb{R}_{\max}$. 
This problem is found in early works of the ordered algebra~\cite{Zimmermann1981} or the extremal algebra~\cite{Zimmermann2003}, both of which include the tropical semiring, and in application areas such as scheduling with and/or constraints~\cite{Mohring2004}.
However, its algorithmic aspects have attracted particular attention following the work of Butkovi\v{c} and Aminu~\cite{Butkovic2009}.
We remark that finding even a feasible solution satisfying $A^+ \otimes x \oplus b^+ \geq A^- \otimes x \oplus b^-$ is not easy. 
The alternating method proposed by Cuninghame-Green and Butkovi\v{c}~\cite{Green2003} is known to solve this system of inequalities in pseudo-polynomial time complexity.
Akian et al.~\cite{Akian2010} demonstrated that finding a solution to a system of two-sided tropical linear inequalities is equivalent to solving the mean-payoff game, which is a well-known $\text{NP} \cap \text{co-NP}$ problem but has no known polynomial-time algorithm~\cite{Zwick1996}.
Hence, algorithms for the mean-payoff game, such as the GKK algorithm~\cite{Gurvich1988}, value iteration~\cite{Zwick1996}, policy iteration~\cite{Bjorklund2007,Dhingra2006}, and energy game approach~\cite{Benerecetti2024,Brim2011,Dorfman2019}, are also applicable to tropical linear systems.

In this study, we introduce a new type of objective function, namely, a tropical linearly factorized Laurent polynomial, of the form
\begin{align*}
	g(x) := \bigotimes_{k=1}^p (c_{k,1} \otimes x_1 \oplus \cdots \oplus  c_{k,n} \otimes x_n \oplus d_k)^{\otimes \mu^+_k} \oslash (x_1^{\otimes \mu^-_1} \otimes \cdots \otimes x_n^{\otimes \mu^-_n}).
\end{align*}
Here, the tropical power $a^{\otimes r}$ stands for the $r$-fold tropical product, that is, $a^{\otimes r} = ra$, and $x_j$ denotes the $j$th entry of vector $x$.
A tropical linear function is obtained by setting $p=1$, $\mu^+_1 = 1$, and $\mu^-_j = 0$ for $j=1,2,\dots,n$. 
For simplicity of notation, we study the homogenized version of tropical linearly factorized programming (TLFacP) defined as
\begin{alignat}{2}
&\text{minimize}& \qquad & f(x) := (C \otimes x)^{\otimes \mu^+} \oslash x^{\otimes \mu^-}, \label{eq:objective} \\
&\text{subject to}& \qquad & A^+ \otimes x \geq A^- \otimes x, 	\label{eq:const}
\end{alignat}
where $A^\pm = (a_{i,j}^{\pm}) \in \mathbb{R}_{\max}^{m \times n}$, $C = (c_{k,j}) \in \mathbb{R}_{\max}^{p \times n}$, $\mu^+ \in \mathbb{Z}_{+}^p$ and $\mu^- \in \mathbb{Z}_{+}^n$.
Note that inhomogeneous problems can be easily translated to homogeneous ones.
We adopt the abbreviation TLFacP to distinguish it from tropical linear fractional programming (TLFP)~\cite{Gaubert2012}.
In conventional expression, the TLFacP \eqref{eq:objective}--\eqref{eq:const} can be rewritten as
\begin{alignat*}{3}
&\text{minimize}& \qquad & \sum_{k=1}^p \mu^+_k \cdot \max_j (c_{k,j} + x_j) - \sum_{j=1}^n \mu^-_j x_j, &\qquad& \\
&\text{subject to}& \qquad & \max_j (a^+_{i,j} + x_j) \geq \max_j (a^-_{i,j} + x_j), &\qquad& i=1,2,\dots,m.
\end{alignat*}
When the constraint~\eqref{eq:const} is not imposed, the problem is equivalent to the dual of the Hitchcock transportation problem~\cite{Hitchcock1941} defined by the cost matrix $C$ and the supply and demand vectors $\mu^+$ and $\mu^-$, respectively.
Thus, TLFacP is a common generalization of TLP and the transportation problem.
In addition, the function~\eqref{eq:objective} is a polyhedral L-convex function~\cite{Murota2000} that possesses many desirable properties originating from discrete convexity~\cite{Murota1998}.
An L-convex polyhedron is known to be convex both conventionally and tropically~\cite{Johnson2015}.
These facts suggest that the TLFacP connects discrete and tropical convex analysis.

\begin{figure}
\centering
\includegraphics[width=0.5\textwidth]{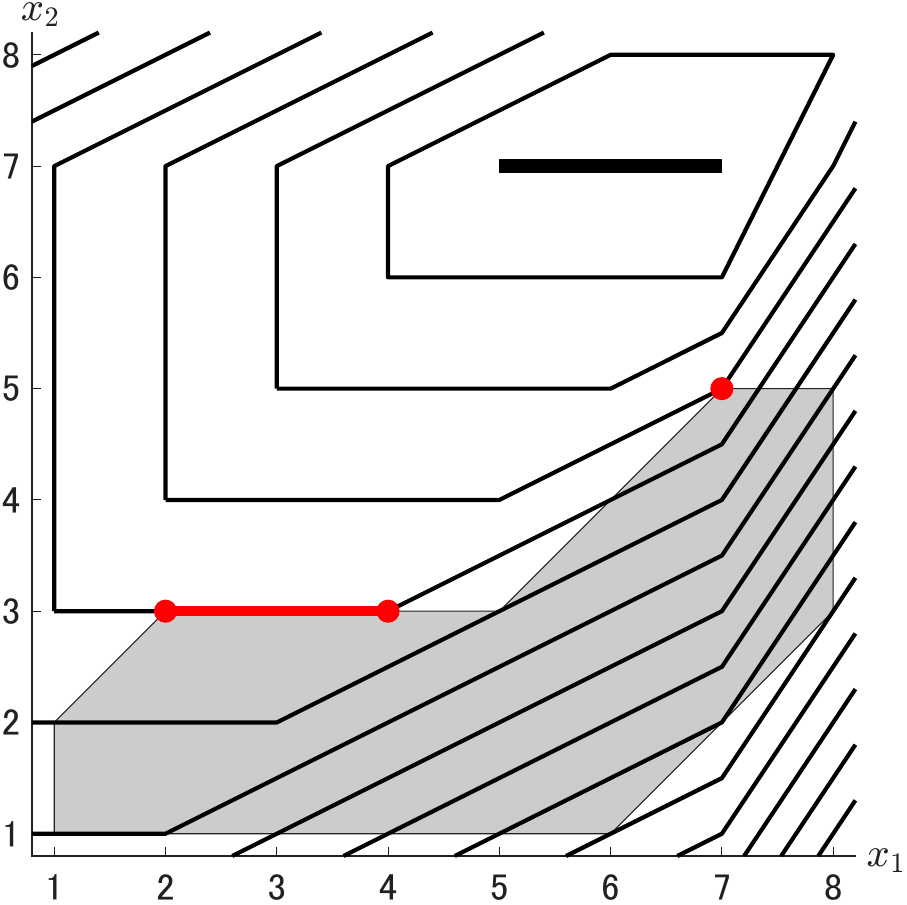}
\caption{Examples of contour lines of $f(x)$ and feasible set $S$ projected onto the plane $x_3 = 0$. The objective function $f(x)$ attains the minimum on the segment $\{(x_1,7,0) \mid 5 \leq x_1 \leq 7\}$, which does not intersect $S$. The optimal solution is $(7,5,0)$. All points on the segment $\{(x_1,3,0) \mid 2 \leq x_1 \leq 4\}$ are sub-optimal local optima.}		\label{fig:intro1}
\end{figure}

\subsection{Our Results}

The feature of TLFacP is summarized as follows:
(i) the objective function $f(x)$ is convex in the conventional sense, whereas the feasible set is not; 
(ii) the feasible set $S$ is convex in the tropical sense, whereas the objective function is not.
Hence, TLFacP exhibits characteristics of non-convex optimization in the sense that a local optimum may not be a global optimum, see \Cref{fig:intro1}.
In Section 3.4, we will see that TLFacP is NP-hard by constructing a reduction from the minimum set cover problem, which is known to be NP-hard~\cite{Karp1972}.
Finding an optimal solution is difficult; hence, we propose an algorithm to find a local optimum of a TLFacP.

Our algorithm is based on the descent method.
We first show that the descent direction is given by the indicator vector of some subset of $\{1,2,\dots,n\}$. This means that we can determine a descent direction in a combinatorial manner.
Thus, the core of our algorithm is to determine a subset corresponding to a feasible descent direction.
Next, we exploit the tangent digraph to analyze the change in the objective value when each entry of the decision variable is increased.
The concept of tangent digraphs was first introduced in~\cite{Allamigeon2013} to characterize the extreme rays of tropical convex cones and was later applied in the tropical simplex method for TLP~\cite{Allamigeon2015} to understand active constraints.
Tangent digraphs in these works only have edges corresponding to the constraints.
In this study, we extend the tangent digraph by appending edges corresponding to linear forms in the objective function so that we can deal with tropical linearly factorized polynomials rather than single tropical linear forms.
Then, we show that a feasible descent direction is determined by solving the minimum $s$-$t$ cut problem on some subgraph of the tangent digraph.

If the TLFacP \eqref{eq:objective}--\eqref{eq:const} is non-degenerate, the tangent digraph does not contain a circuit.
This fact enables us to efficiently determine a feasible descent direction as a tree obtained by removing an edge from the tangent digraph.
This leads to the polynomial time complexity $O(r_A+r_C)$ to find a feasible descent direction, where $r_A$ is the total number of finite entries in $A^+$ and $A^-$, and $r_C$ is the number of finite entries in $C$.
For integer instance, we show the total complexity to find a local optimum is at most $O((m+n)(r_A+r_C)MD)$, where $M$ is the maximum absolute value of the finite entries of $A^+$, $A^-$, and $C$, and $D = \sum_{k=1}^p \mu^+_k$.

We again note that no polynomial-time algorithm is known to find even a feasible solution of a TLFacP.
By adopting the mean-payoff game approach, a feasible solution is obtained by an $O((m+n)r_AM)$ algorithm in~\cite{Brim2011} that finds a winning strategy of the game.
Our algorithm to solve a non-degenerate TLFacP has the complexity obtained by almost multiplying the complexity of finding a feasible solution by the degree $D$ of the objective function.
In particular, the complexity of finding a local optimum is the same as that of finding a feasible solution when $D$ is a small fixed value.

\subsection{Related Works}

In the field of tropical optimization, TLP has attracted significant attention.
The first algorithm for TLP in Butkovi\v{c} and Aminu~\cite{Butkovic2009} was a bisectional search with $O(mn(m+n)M\log M)$ time complexity.
Some polynomial-time algorithms were developed in subsequent studies~\cite{Butkovic2014,Butkovic2015} for the very special case in which the fractional parts of the coefficients satisfy a property called OneFP. 
Allamigeon et al.~\cite{Allamigeon2015} proposed a tropical simplex method to solve a TLP by exploiting the correspondence between the tropical semiring and the field of the Hahn series~\cite{Develin2007}.
Subsequently, some variant algorithms have been developed, such as the shadow-vertex algorithm~\cite{Allamigeon2014a} and the extension to degenerate cases ~\cite{Allamigeon2014b}.
The simplex method was reformulated for the abstraction of TLP using a tropical analogue of oriented matroid~\cite{Loho2020}. 
Our proposed algorithm can be regarded as a variant of the tropical simplex method, but it applies to a broader class of objective functions than tropical linear ones.

Some studies on tropical optimization problems have addressed nonlinear objective functions.
For example, a linear fractional function $(c^+ \otimes x \oplus d^+) \oslash (c^- \otimes x \oplus d^-)$ was considered in~\cite{Gaubert2012}, where $c^+$ and $c^-$ are row vectors and $d^+$ and $d^-$ are scalars. Newton's method based on the parametric mean-payoff game was proposed to minimize the aforementioned function. In~\cite{Goncalves2014}, a tropical analogue of the Charnes--Cooper transformation was developed to transform a tropical linear fractional programming into a TLP.
A pseudo-quadratic objective function $q(x):= (-x^\top) \otimes C \otimes x \oplus (-x^\top) \otimes d^+ \oplus d^- \otimes x$ has been considered in a series of works by Krivulin, e.g.,~\cite{Krivulin2015,Krivulin2013,Krivulin2024}, and its relations to the tropical spectral theory and applications to the scheduling problem have been investigated. A bisection and Newton's methods for minimizing $q(x)$ above were developed in~\cite{Parsons2023}.
Compared to these problems, this study focuses on an objective function whose numerator and denominator have arbitrarily large degrees. 
Aminu and Butkovi\v{c}~\cite{Aminu2011} developed heuristic algorithms to minimize nonlinear isotone functions.
The main content of the aforementioned work is the local search changing only one or two entries of the current solution. 
In contrast, our algorithm does not restrict the size of neighborhoods; hence, it is less likely to fall into a local optimum.
Another optimization problem related to the tropical semiring is location problems based on tropical metric~\cite{Comaneci2024}.
Such problems include the minimization of our objective function~\eqref{eq:objective}, but no constraints are imposed.
Recently, the tropical gradient descent to solve the tropical location problem has been developed~\cite{Talbut2025}.

\subsection{Organization}

The rest of this paper is organized as follows.
In Section 2, we present definitions and notations on tropical linear algebra.
In Section 3, we describe some basic facts relating to TLFacP, such as the homogenization, the connection to the transportation problem, the L-convexity of the objective function, and NP-hardness.
In Section 4, we first characterize the local optimality of a TLFacP.
Subsequently, we describe the descent method to find a local optimum using the tangent digraph. 
In Section 5, we devise a pseudo-polynomial-time algorithm for the non-degenerate case and prove its complexity.
Finally, we summarize the results of the paper with some remarks in Section 6.

\section{Preliminaries on Tropical Linear Algebra}

First, the notations used in this study are described.
The set of all nonnegative integers is denoted by $\mathbb{Z}_{+}$.
The $j$th entry of vector $x$ is denoted by $(x)_j$, or simply by $x_j$.
For vectors $x$ and $y$, the inequality $x \geq y$ indicates that $x_j \geq y_j$ holds for all $j$.
Other types of inequalities can be understood similarly.
For positive integer $n$, the set of integers $\{1,2,\dots,n\}$ is denoted by $[n]$.
For $J \subset [n]$, the indicator vector $\chi_J$ is defined by $(\chi_J)_j = 1$ if $j \in J$ and $(\chi_J)_j = 0$ otherwise.

\subsection{Tropical Linear Algebra}

The tropical semiring is a set of numbers $\mathbb{R}_{\max} := \mathbb{R} \cup \{-\infty\}$ with tropical addition $a \oplus b := \max(a,b)$ and multiplication $a \otimes b := a+ b$.
Tropical division is defined as $a \oslash b:= a - b$.
Tropical power $a^{\otimes r}$ is the $r$-fold tropical product of $a$, which is identical to $ra$.
We state that $a \in \mathbb{R}_{\max}$ is finite when we emphasize that $a \neq -\infty$.
For details, please refer to~\cite{Baccelli1992,Butkovic2010,Heidergott2005,Joswig2021}.

Let $\mathbb{R}_{\max}^n$ and $\mathbb{R}_{\max}^{m \times n}$ be the sets of $n$-dimensional column vectors and $m \times n$ matrices whose entries are in $\mathbb{R}_{\max}$, respectively.
Arithmetic operations of vectors and matrices over $\mathbb{R}_{\max}$ are defined as in conventional linear algebra.
For vectors $x, y \in \mathbb{R}_{\max}^{n}$, a matrix $A = (a_{i,j}) \in \mathbb{R}_{\max}^{m \times n}$, and a scalar $\alpha \in \mathbb{R}_{\max}$, vector sum $x \oplus y \in \mathbb{R}_{\max}^{n}$, matrix-vector product $A \otimes x \in \mathbb{R}_{\max}^{m}$, and scalar multiplication $\alpha \otimes x \in \mathbb{R}_{\max}^{n}$ are defined as follows:
\begin{align*}
	&(x \oplus y)_{j} := x_j \oplus y_j = \max(x_j,y_j), \\
	&(A \otimes x)_i := \bigoplus_{j=1}^n a_{i,j} \otimes x_j = \max_{j \in [n]} (a_{i,j} + x_j), \\
	&(\alpha \otimes x)_j := \alpha \otimes x_j = \alpha + x_j.
\end{align*}
(Matrix sum and matrix-matrix product are also defined similarly, but we will not use them in this paper.)
For $x \in \mathbb{R}_{\max}^n$ and $\mu \in \mathbb{Z}_+^n$, multi-indexed power $x^{\otimes \mu}$ is defined by $x^{\otimes \mu} := \bigotimes_{j\in[n]} x_j^{\otimes \mu_j} = \sum_{j\in[n]} \mu_jx_j$.
The ``dual'' operator $a \oplus' b:= \min(a,b)$ is sometimes useful in tropical linear algebra.
For vectors $x, y \in \mathbb{R}_{\max}^{n}$, ``min-plus'' vector sum $x \oplus' y \in \mathbb{R}_{\max}^{n}$ is defined by
\begin{align*}
	(x \oplus' y)_{j} := x_j \oplus' y_j = \min(x_j,y_j).
\end{align*}

Let $\supp(x)$ denote the support of $x \in \mathbb{R}_{\max}^n$, that is, $\supp(x) = \{ j \in [n] : x_j \neq \infty\}$.
The tropical distance between two points $x,y \in \mathbb{R}_{\max}^n$ is defined by
\begin{align*}
	d(x,y) := \begin{cases} \max_{j \in \supp(x)} |x_j-y_j| & \text{if } \supp(x) = \supp(y), \\ \infty & \text{otherwise}. \end{cases}		
\end{align*}
This means that we cannot move from one vector to another with a different support by a continuous change.

A nonempty subset $X \subset \mathbb{R}_{\max}^n$ is called a subspace if $\alpha \otimes x \oplus \beta \otimes y \in X$ for all $x,y \in X$ and $\alpha,\beta \in \mathbb{R}_{\max}$.
A subspace of $\mathbb{R}_{\max}^n$ is also regarded as a tropical convex cone since all finite values are larger than $-\infty$, which is ``zero'' in the tropical semiring. 
Especially, $\tropconv(x,y) := \{ \alpha \otimes x \oplus \beta \otimes y : \alpha \oplus \beta = 0\}$ is called the tropical segment between $x$ and $y$.
Refer to \cite{Develin2004} for details of tropical convexity.

\subsection{Tropical linear systems}
For tropical matrices $A^+, A^- \in \mathbb{R}_{\max}^{m \times n}$, we consider a tropical linear system $A^+ \otimes x \geq A^- \otimes x$.
The set of all solutions is denoted by $S(A^+,A^-)$, that is, $S(A^+,A^-) := \{ x \in \mathbb{R}_{\max}^n : A^+ \otimes x \geq A^- \otimes x\}$.
The next proposition states that $S(A^+,A^-)$ is a subspace of $\mathbb{R}_{\max}^n$.

\begin{proposition}[e.g., \cite{Green1979}]		\label{prop:linsys}
The set of all finite solutions $S(A^+,A^-)$ satisfies the following properties:
\begin{enumerate}
\item $\alpha \otimes x \in S(A^+,A^-)$ for any $x \in S(A^+,A^-)$ and $\alpha \in \mathbb{R}_{\max}$.
\item $x \oplus y \in S(A^+,A^-)$ for any $x,y \in S(A^+,A^-)$.
\end{enumerate}
\end{proposition}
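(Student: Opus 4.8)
The plan is to verify the two closure properties directly from the definition $(A \otimes x)_i = \max_{j}(a_{i,j} + x_j)$, relying on two elementary facts about left multiplication by a fixed tropical matrix: it commutes with tropical scalar multiplication, and it distributes over the tropical sum $\oplus$.

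First I would record these distributivity identities. For any $A \in \mathbb{R}_{\max}^{m \times n}$, scalar $\alpha \in \mathbb{R}_{\max}$, and vectors $x, y \in \mathbb{R}_{\max}^n$, a coordinatewise computation yields
\begin{align*}
A \otimes (\alpha \otimes x) = \alpha \otimes (A \otimes x), \qquad A \otimes (x \oplus y) = (A \otimes x) \oplus (A \otimes y).
\end{align*}
The first is obtained by factoring the common constant $\alpha$ out of each maximum, and the second follows from $\max_j(a_{i,j} + \max(x_j,y_j)) = \max(\max_j(a_{i,j}+x_j),\, \max_j(a_{i,j}+y_j))$. Both hold entrywise and involve no cancellation, so they remain valid even when some entries equal $-\infty$.

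For property 1, take $x \in S(A^+,A^-)$, so that $A^+ \otimes x \geq A^- \otimes x$. The first identity, applied to both $A^+$ and $A^-$, reduces the claim to $\alpha \otimes (A^+ \otimes x) \geq \alpha \otimes (A^- \otimes x)$; this holds because tropical scalar multiplication adds the same constant $\alpha$ to every coordinate, and adding a constant preserves the entrywise order (in the degenerate case $\alpha = -\infty$ both sides collapse to the all-$(-\infty)$ vector). For property 2, take $x, y \in S(A^+,A^-)$ and apply the second identity:
\begin{align*}
A^- \otimes (x \oplus y) = (A^- \otimes x) \oplus (A^- \otimes y) \leq (A^+ \otimes x) \oplus (A^+ \otimes y) = A^+ \otimes (x \oplus y),
\end{align*}
where the middle inequality is the monotonicity of the coordinatewise maximum applied to $A^- \otimes x \leq A^+ \otimes x$ and $A^- \otimes y \leq A^+ \otimes y$. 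This is precisely $x \oplus y \in S(A^+,A^-)$.

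I do not expect a substantive obstacle: the argument is a short unwinding of definitions. The only point requiring care is the bookkeeping with $-\infty$ entries, namely confirming that both distributivity identities and the final monotonicity step survive in the extended arithmetic of $\mathbb{R}_{\max}$.
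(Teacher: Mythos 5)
Your proof is correct. The paper itself gives no proof of this proposition --- it is cited as a known result from the literature (e.g., Cuninghame-Green) --- and your argument is the standard direct verification: the two distributivity identities $A \otimes (\alpha \otimes x) = \alpha \otimes (A \otimes x)$ and $A \otimes (x \oplus y) = (A \otimes x) \oplus (A \otimes y)$, combined with monotonicity of the coordinatewise maximum, immediately yield both closure properties, and your attention to the $-\infty$ cases is appropriate and handled correctly.
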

If all entries of $A^+$ and $A^-$ are finite integers, then an integer solution of $A^+ \otimes x \geq A^- \otimes x$ can be found in at most $O(mn(m+n)M)$ time using the alternating method~\cite{Green2003},
where $M$ is the maximum absolute value of the integer entries of $A^+$ and $A^-$.
Even if $A^+$ and/or $A^-$ contain (non-integer) real numbers, the alternating method terminates within a finite number of iterations~\cite{Sergeev2009}.
Moreover, if we use the result in the mean-payoff game, a feasible solution can be obtained in $O((m+n)r_AM)$ time, where $r_A$ is the total number of finite entries of $A^+$ and $A^-$~\cite{Brim2011}.

\section{Basic Results on Tropical Linearly Factorized Programming}

In this section, we present some results on TLFacP to situate it in related problems.

\subsection{Inhomogeneous Problem}

Let us consider an inhomogeneous TLFacP of the form
\begin{alignat}{2} 
&\text{minimize}& \qquad & (C \otimes x \oplus d)^{\otimes \mu^+} \oslash x^{\otimes \mu^-}, 	\label{eq:inhom1}\\
&\text{subject to}& \qquad & A^+ \otimes x \oplus b^+ \geq A^- \otimes x \oplus b^-, 	\label{eq:inhom2}
\end{alignat}
where $A^\pm \in \mathbb{R}_{\max}^{m \times n}$, $b^\pm \in \mathbb{R}_{\max}^{m}$, $C \in \mathbb{R}_{\max}^{p \times n}$, $d \in \mathbb{R}_{\max}^{p}$, $\mu^+ \in \mathbb{Z}_{+}^p$ and $\mu^- \in \mathbb{Z}_{+}^n$. 
Introducing a new variable $x_{n+1}$ and defining $\nu := \sum_{j \in [n]} \mu^-_j - \sum_{k \in [p]} \mu^+_k$, we homogenize this problem as
\begin{alignat}{2}
&\text{minimize}& \qquad & (\tilde{C} \otimes \tilde{x})^{\otimes \tilde{\mu}^+} \oslash \tilde{x}^{\otimes \tilde{\mu}^-}, 	\label{eq:hom1}\\
&\text{subject to}& \qquad & \tilde{A}^+ \otimes \tilde{x} \geq \tilde{A}^- \otimes \tilde{x},	\label{eq:hom2}
\end{alignat}
where $\tilde{x} = \begin{psmallmatrix} x \\ x_{n+1} \end{psmallmatrix}$, $\tilde{A}^{\pm} = \begin{pmatrix} A^{\pm} & b^{\pm} \end{pmatrix}$, $\tilde{C} = \begin{psmallmatrix} &C& & d \\ -\infty &\cdots&-\infty & 0 \end{psmallmatrix}$, $\tilde{\mu}^+ = \begin{psmallmatrix} \mu^+ \\ \max(\nu,0) \end{psmallmatrix}$, and $\tilde{\mu}^- = \begin{psmallmatrix} \mu^- \\ \max(-\nu,0) \end{psmallmatrix}$.
We remark that $\sum_{k \in [p+1]} \tilde{\mu}^+_k = \sum_{j \in [n+1]} \tilde{\mu}^-_j$.
Then, $x \in \mathbb{R}_{\max}^n$ satisfies~\eqref{eq:inhom2} if and only if $\begin{psmallmatrix} x \\ 0 \end{psmallmatrix} \in S(\tilde{A}^+, \tilde{A}^-)$. Moreover, by \Cref{prop:linsys}, $(-x_{n+1}) \otimes x$ satisfies~\eqref{eq:inhom2} for any $\tilde{x} \in S(\tilde{A}^+, \tilde{A}^-)$ with $x_{n+1} \neq -\infty$.
We observe that
\begin{align*}
	(\tilde{C} \otimes \tilde{x})^{\otimes \tilde{\mu}^+} \oslash \tilde{x}^{\otimes \tilde{\mu}^-} 
	& = (\tilde{C} \otimes ((-x_{n+1}) \otimes \tilde{x}))^{\otimes \tilde{\mu}^+} \oslash ((-x_{n+1}) \otimes \tilde{x})^{\otimes \tilde{\mu}^-} \\
	& = (C \otimes x \oplus d)^{\otimes \mu^+} \oslash x^{\otimes \mu^-}.
\end{align*}
Thus, the inhomogeneous problem~\eqref{eq:inhom1}--\eqref{eq:inhom2} is equivalent to the homogenized problem~\eqref{eq:hom1}--\eqref{eq:hom2}.

\subsection{Connection to Transportation Problem}

Here, we observe that the minimization of $f(x)$ defined in~\eqref{eq:objective} is equivalent to the dual problem of the Hitchcock transportation problem~\cite{Hitchcock1941}.
Let us consider a directed bipartite graph with vertex sets $U = \{u_1,u_2,\dots,u_p\}$ and $V = [n]$, and edge set $E = \{(u_k, j) \in U \times V : c_{k,j} \neq -\infty\}$. The cost of an edge $(u_k,j) \in E$ is given by $c_{k,j}$. For every edge, the lower and upper bounds of the capacity are $0$ and $\infty$, respectively. The supply of $u_k\in U$ and demand of $j \in V$ are $\mu^+_k$ and $\mu^-_j$, respectively. Then, the maximum-cost flow problem on this bipartite graph can be represented by the following linear programming problem:
\begin{alignat*}{3}
&\text{maximize}& \qquad & \sum_{(u_k,j) \in E} c_{k,j} z_{k,j}, & \quad &\\
&\text{subject to}& \qquad & \sum_{j : (u_k,j) \in E} z_{k,j} = \mu^+_k, &\quad& k \in [p], \\
& & \qquad & \sum_{k: (u_k,j) \in E} z_{k,j} = \mu^-_i, &\quad& j \in [n], \\
& & \qquad & z_{k,j} \geq 0, &\quad& (u_k,j) \in E. 
\end{alignat*}
The dual of this problem is derived as follows:
\begin{alignat*}{3}
&\text{minimize}& \qquad & \sum_{k=1}^p \mu^+_k y_k - \sum_{j=1}^n \mu^-_j x_j, & \quad &\\
&\text{subject to}& \qquad & y_k - x_j \geq c_{k,j}, &\quad& (u_k,j) \in E. 
\end{alignat*}
By recalling that $(u_k,j) \not\in E$ implies $c_{k,j} = -\infty$, the constraints of the dual problem are written as
\begin{align*}
	y_k \geq \max_{j \in [n]} (c_{k,j} + x_j) = (C \otimes x)_k, \qquad k \in [p].
\end{align*}
As long as we are to minimize the objective function, we may set $y_k = (C\otimes x)_k$ for all $k \in [p]$.
Then, we obtain the following unconstrained minimization problem:
\begin{alignat*}{2}
&\text{minimize}& \qquad & f(x) = (C \otimes x)^{\otimes \mu^+} \oslash x^{\otimes \mu^-}.
\end{alignat*}
If $\sum_{k \in [p]} \mu^+_k \neq \sum_{j \in [n]} \mu^-_j$, then the objective function $f(x)$ is not bounded below because the primal problem has no feasible solution.

\subsection{L-convexity of Objective Function}

We next focus on the convexity of $f(x)$ given by~\eqref{eq:objective}.
We observe that $f(x)$ is a polyhedral convex function because it is expressed as the maximum of finitely many affine functions.
\Cref{prop:lconv0} below implies that $f(x)$ is a polyhedral L-convex function~\cite{Murota2000}.
Although this is a consequence of Section 3.2 together with general results in~\cite{Murota2000}, we present a self-contained proof.

\begin{proposition}		\label{prop:lconv0}
The function $f(x)$ defined by~\eqref{eq:objective} satisfies the following properties.
\begin{enumerate}
\item $f(\alpha \otimes x) = f(x) + \alpha\,(\sum_{k \in [p]} \mu^+_k - \sum_{j \in [n]} \mu^-_j)$ for any $x \in \mathbb{R}^n_{\max}$ and $\alpha \in \mathbb{R}$.
\item $f(x) + f(y) \geq f(x \oplus y) + f(x \oplus' y)$ for any $x,y \in \mathbb{R}_{\max}^n$.
\end{enumerate}
\end{proposition}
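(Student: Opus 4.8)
The plan is to treat the two assertions separately, reducing both to elementary facts about the max-affine functions $\phi_k(x) := (C \otimes x)_k = \max_{j\in[n]}(c_{k,j}+x_j)$, in terms of which $f(x) = \sum_{k\in[p]} \mu^+_k\,\phi_k(x) - \sum_{j\in[n]} \mu^-_j\,x_j$. For the first property I would simply push the diagonal shift through each maximum: since $(\alpha \otimes x)_j = \alpha + x_j$, we get $\phi_k(\alpha \otimes x) = \alpha + \phi_k(x)$, while the linear term contributes $-\alpha \sum_j \mu^-_j$. Collecting the coefficients of $\alpha$ yields exactly $\alpha\,(\sum_k \mu^+_k - \sum_j \mu^-_j)$; this is a one-line computation once the shift is factored out of the max.

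For the second property, the first step is to observe that the linear part of $f$ contributes an \emph{equality}, not merely an inequality. From $\max(x_j,y_j) + \min(x_j,y_j) = x_j + y_j$ we obtain $(x \oplus y)_j + (x \oplus' y)_j = x_j + y_j$ for every $j$, hence $-\sum_j \mu^-_j x_j - \sum_j \mu^-_j y_j = -\sum_j \mu^-_j (x\oplus y)_j - \sum_j \mu^-_j (x\oplus' y)_j$. Since every $\mu^+_k \geq 0$, it therefore suffices to establish the single-function inequality
\[
	\phi_k(x) + \phi_k(y) \geq \phi_k(x \oplus y) + \phi_k(x \oplus' y) \qquad \text{for each } k\in[p].
\]

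The heart of the argument is this last inequality, which I would prove by a two-case witness analysis. Fix $k$ and abbreviate $\phi = \phi_k$ and $c_j = c_{k,j}$. Choose an index $p$ attaining the maximum defining $\phi(x\oplus y)$ and an index $q$ attaining the maximum defining $\phi(x \oplus' y)$. Because $(x \oplus' y)_q = \min(x_q,y_q) \leq x_q$ and likewise $\leq y_q$, we always have both $\phi(x\oplus'y) \leq \phi(x)$ and $\phi(x\oplus'y) \leq \phi(y)$; the informative term is $\phi(x\oplus y) = c_p + \max(x_p,y_p)$. If the maximum at $p$ is attained by $x$, i.e.\ $\max(x_p,y_p)=x_p$, then $\phi(x\oplus y) = c_p + x_p \leq \phi(x)$, and pairing this with $\phi(x\oplus'y) \leq \phi(y)$ gives the claim; if instead it is attained by $y$, I pair $\phi(x\oplus y) \leq \phi(y)$ with $\phi(x\oplus'y)\leq \phi(x)$. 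Either way the two right-hand terms are bounded by $\phi(x)$ and $\phi(y)$ in some order, and summing with weights $\mu^+_k$ together with the equality from the linear part yields Property 2.

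The only point requiring care, and the one I would flag as the main (if minor) obstacle, is the presence of $-\infty$ entries in $C$ and in the vectors: I must ensure that the witness indices $p,q$ and all comparisons remain valid in the extended arithmetic of $\mathbb{R}_{\max}$. This is handled by noting that $\phi$ is monotone nondecreasing in each coordinate, so $\phi(x\oplus y) \geq \phi(x),\phi(y) \geq \phi(x\oplus' y)$ and the degenerate all-$(-\infty)$ case is covered automatically, while the key inequalities $c_p + x_p \leq \phi(x)$ and $c_q + \min(x_q,y_q) \leq \phi(y)$ hold in $[-\infty,\infty)$ without any change of sign. Thus no special treatment of infinite entries beyond this remark is needed.
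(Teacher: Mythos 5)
Your proof is correct and follows essentially the same route as the paper: Property 1 by factoring the shift out of each maximum, and Property 2 by combining the exact identity for the linear part with the per-row inequality $\phi_k(x)+\phi_k(y) \geq \phi_k(x\oplus y)+\phi_k(x\oplus' y)$, proved by a case split on whether $x$ or $y$ attains $\phi_k(x\oplus y)$ and monotonicity for the $x\oplus' y$ term. The extra remark on $-\infty$ entries is a harmless addition (reusing $p$ as a witness index clashes notationally with the number of rows of $C$, but that is cosmetic).
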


\begin{proof}For $x \in \mathbb{R}_{\max}^n$ and $\alpha \in \mathbb{R}$, we have
\begin{align*}
	f(\alpha \otimes x) &= \sum_{k \in [p]} \mu^+_k \alpha + \sum_{k \in [p]} \mu^+_k \max_{j \in [n]} (c_{k,j} + x_j) - \sum_{j \in [n]} \mu^-_j \alpha -\sum_{j \in [n]} \mu^-_j  x_j \\
	&= f(x) + \alpha \left( \sum_{k \in [p]} \mu^+_k - \sum_{j \in [n]} \mu^-_j \right),
\end{align*}
obtaining Property 1.

Since $x_j + y_j = \max(x_j,y_j) + \min(x_j,y_j)$ for all $j \in [n]$, we can verify that 
\begin{align}
	\sum_{j \in [n]} \mu^-_j x_j + \sum_{j \in [n]} \mu^-_j y_j = \sum_{j \in [n]} \mu^-_j (x \oplus y)_j + \sum_{j \in [n]} \mu^-_j (x \oplus' y)_j.	\label{eq:lconv1}
\end{align}
For each $k \in [p]$, we may assume without loss of generality that $(C \otimes (x \oplus y))_k = (C \otimes x)_k$ because we have $(C \otimes (x \oplus y))_k = (C \otimes x)_k \oplus (C \otimes y)_k$.
Noting that $C \otimes y \geq C \otimes (x \oplus' y)$, we have
\begin{align*}
	(C \otimes x)_k + (C \otimes y)_k \geq (C \otimes (x \oplus y))_k + (C \otimes (x \oplus' y))_k.
\end{align*}
As this holds for all $k \in [p]$, we have
\begin{align*}
	&\sum_{k \in [p]} \mu^+_k (C \otimes x)_k + \sum_{k \in [p]} \mu^+_k (C \otimes y)_k  \\
	&\qquad \geq \sum_{k \in [p]} \mu^+_k (C \otimes (x\oplus y))_k + \sum_{k \in [p]} \mu^+_k (C \otimes (x \oplus' y))_k.	
\end{align*}
By combining this inequality with \eqref{eq:lconv1}, we obtain Property 2.
\end{proof}

If $\sum_{k \in [p]} \mu^+_k \neq \sum_{j \in [n]} \mu^-_j$, TLFacP \eqref{eq:objective}--\eqref{eq:const} has no optimal solution because of \Cref{prop:linsys,prop:lconv0}.
Hence, we will consider the case where $\sum_{k \in [p]} \mu^+_k = \sum_{j \in [n]} \mu^-_j$. This condition is satisfied for the problem obtained by homogenization of an inhomogeneous problem.
\Cref{prop:lconv0} will be referred to in the following form.

\begin{corollary}		\label{prop:lconv}
If $\sum_{k \in [p]} \mu^+_k = \sum_{j \in [n]} \mu^-_j$, the function $f(x)$ defined by~\eqref{eq:objective} satisfies the following properties.
\begin{enumerate}
\item $f(\alpha \otimes x) = f(x)$ for any $x \in \mathbb{R}_{\max}^n$ and $\alpha \in \mathbb{R}$.
\item $f(x) + f(y) \geq f(x \oplus y) + f(x \oplus' y)$ for any $x,y \in \mathbb{R}_{\max}^n$.
\end{enumerate}
\end{corollary}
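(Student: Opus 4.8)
The plan is to derive both properties as immediate specializations of \Cref{prop:lconv0}, using the degree-balance hypothesis $\sum_{k \in [p]} \mu^+_k = \sum_{j \in [n]} \mu^-_j$ only to collapse the correction term in Property~1.

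For Property~2 I would simply note that it coincides verbatim with Property~2 of \Cref{prop:lconv0}, whose proof makes no use of any relation between the total supply $\sum_{k} \mu^+_k$ and the total demand $\sum_j \mu^-_j$; the submodularity-type inequality $f(x) + f(y) \geq f(x \oplus y) + f(x \oplus' y)$ therefore holds unconditionally and carries over with nothing further to prove.

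For Property~1 I would invoke Property~1 of \Cref{prop:lconv0}, namely $f(\alpha \otimes x) = f(x) + \alpha\bigl(\sum_{k \in [p]} \mu^+_k - \sum_{j \in [n]} \mu^-_j\bigr)$, and observe that the hypothesis forces the factor $\sum_{k} \mu^+_k - \sum_j \mu^-_j$ to equal $0$. The $\alpha$-dependent term then disappears, leaving the desired tropical homogeneity $f(\alpha \otimes x) = f(x)$; geometrically this says that $f$ is constant along tropical rays once the numerator and denominator have equal degree.

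The argument has no real obstacle, since the corollary is a pure specialization; the only point worth checking is precisely that the balance condition cancels the linear-in-$\alpha$ term and affects nothing else. If a self-contained proof were preferred, one could instead rerun the computation in the proof of \Cref{prop:lconv0} with $\sum_k \mu^+_k = \sum_j \mu^-_j$ substituted from the outset, so that the contributions $\sum_k \mu^+_k \alpha$ and $\sum_j \mu^-_j \alpha$ cancel directly; but deducing it from \Cref{prop:lconv0} is the cleanest route.
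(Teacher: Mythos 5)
Your proposal is correct and matches the paper exactly: the paper presents this statement as a corollary of \Cref{prop:lconv0} with no separate proof, precisely because the balance condition $\sum_k \mu^+_k = \sum_j \mu^-_j$ kills the $\alpha$-dependent term in Property~1 while Property~2 is unchanged. Nothing further is needed.
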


\subsection{NP-hardness}

Although the objective function is a polyhedral L-convex function, finding a global optimum of a TLFacP is still difficult because the feasible set is not convex in the sense of conventional algebra. Here, we show that TLFacP is NP-hard by constructing a polynomial reduction from the minimum set cover problem. A similar technique can be found in~\cite{Butkovic2009}.
Given a ground set $W$ and a family of subsets $\mathscr{I} = \{ I_1, I_2, \dots, I_n \} \subset 2^W$, a subfamily $\{I_{j_1}, I_{j_2}, \dots, I_{j_q}\} \subset \mathscr{I}$ is called a set cover if $I_{j_1} \cup I_{j_2} \cup \cdots \cup I_{j_q} = W$.
The minimum set cover is the one that has the smallest number of subsets.
As the set cover problem is NP-complete~\cite{Karp1972}, the minimum set cover problem is NP-hard.

\begin{theorem}
TLFacP is NP-hard.
\end{theorem}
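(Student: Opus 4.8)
The plan is to construct, from an instance of minimum set cover with ground set $W = \{w_1,\dots,w_m\}$ and family $\mathscr{I} = \{I_1,\dots,I_n\}$, a homogeneous TLFacP instance whose optimal value equals the size of a minimum cover. I would use $n+1$ decision variables $x_1,\dots,x_n,x_{n+1}$, where $x_j$ (for $j \in [n]$) is intended to indicate whether $I_j$ belongs to the cover and $x_{n+1}$ is an auxiliary coordinate that, by the scale-invariance of both the objective (\Cref{prop:lconv}, Property 1) and the feasible set (\Cref{prop:linsys}, Property 1), may be normalized to $x_{n+1}=0$ without loss of generality. For each $w_i \in W$ write $S_i := \{ j \in [n] \mid w_i \in I_j \}$ for the indices of the subsets containing $w_i$.

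For the objective I would take $p = n$ linear forms isolating the individual variables: set $c_{k,k}=0$ and $c_{k,j}=-\infty$ for $j \neq k$, so that $(C \otimes x)_k = x_k$, together with $\mu^+ = (1,\dots,1)$ and $\mu^-_{n+1}=n$, $\mu^-_j = 0$ for $j \in [n]$. This satisfies $\sum_k \mu^+_k = n = \sum_j \mu^-_j$, and the objective reduces to $f(x) = \sum_{k=1}^n x_k - n\,x_{n+1}$, i.e. $f(x)=\sum_{k=1}^n x_k$ after the normalization $x_{n+1}=0$. For the constraints I would impose, for each element $w_i$, a coverage inequality $\max_{j \in S_i} x_j \ge x_{n+1} + 1$ (encoded by $a^+_{i,j}=0$ for $j\in S_i$, $a^-_{i,n+1}=1$, all other entries $-\infty$), and, for each $j \in [n]$, a lower bound $x_j \ge x_{n+1}$ (encoded by setting the relevant $a^+$-entry on $j$ and the $a^-$-entry on $n+1$ to $0$). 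The resulting matrices $A^\pm \in \mathbb{R}_{\max}^{(m+n)\times(n+1)}$ and $C \in \mathbb{R}_{\max}^{n\times(n+1)}$ are clearly computable in polynomial time.

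The heart of the argument is then to show that $\min f$ over the feasible set equals the minimum cover size $q^\ast$. After normalizing $x_{n+1}=0$, the feasible set becomes $\{x : x_j \ge 0,\ \max_{j\in S_i}x_j \ge 1 \text{ for all } i\}$. For the inequality $\min f \le q^\ast$, the indicator vector $\chi_J$ of any cover $J$ is feasible and satisfies $f(\chi_J)=|J|$. For the reverse inequality $\min f \ge q^\ast$, given any feasible $x$ I would set $J := \{ j \in [n] \mid x_j \ge 1 \}$; each coverage constraint forces some $j \in S_i$ with $x_j \ge 1$, so $J$ is a cover, and since every coordinate is nonnegative while the coordinates in $J$ are at least $1$ we get $|J| \le \sum_{j} x_j = f(x)$. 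Hence $q^\ast \le f(x)$ for all feasible $x$, giving $\min f = q^\ast$; a polynomial-time solver for TLFacP would thus decide minimum set cover.

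The main obstacle I anticipate is not the gadget itself but the passage from the \emph{continuous} optimum of the constructed instance to the \emph{integral} minimum cover: one must verify that no fractional assignment beats an integral one. The rounding step $J = \{j : x_j \ge 1\}$ together with the bound $|J| \le \sum_j x_j$ is what makes this work, and it crucially uses that coverage is a $\max$ (so a single coordinate reaching $1$ suffices) while the objective is the \emph{sum} of the coordinates (so fractional mass never helps). Secondary points to check carefully are that the normalization $x_{n+1}=0$ is legitimate --- which follows from the two scale-invariance properties cited above --- and that $f$ is bounded below on the feasible set, which is guaranteed by the lower-bound constraints $x_j \ge x_{n+1}$ that prevent the objective from drifting to $-\infty$.
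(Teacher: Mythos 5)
Your proposal is correct and is essentially the paper's own reduction from minimum set cover: the same coverage constraints $\max_{j \in S_i} x_j \ge 1$, the same nonnegativity constraints, and the same objective $\sum_{j} x_j$, with the only cosmetic difference that you write out the homogenizing variable $x_{n+1}$ explicitly while the paper states the inhomogeneous instance and appeals to Section~3.1. Your rounding step $J = \{ j : x_j \ge 1\}$ together with the bound $|J| \le \sum_j x_j$ in fact supplies a careful justification of the paper's unargued assertion that the optimum is attained at a $0$--$1$ vector.
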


\begin{proof}
Consider any instance $\mathscr{I} = \{ I_1, I_2, \dots, I_n \} \subset 2^W$ of the minimum set cover problem on $W=[m]$.
We define the following inhomogeneous TLFacP:
\begin{alignat}{3} 
&\text{minimize}& \qquad & x_1 \otimes x_2 \otimes \cdots \otimes x_n, & &	\label{eq:setcover1}\\
&\text{subject to}& \qquad & \bigoplus_{j: I_j \ni i} x_j \geq 1, &\qquad& i \in [m],	\label{eq:setcover2}\\
& & & x_j \geq 0, & &j \in [n].	\label{eq:setcover3}
\end{alignat}
Then, the optimum solution to \eqref{eq:setcover1}--\eqref{eq:setcover3} should be $x \in \{0,1\}^n$.
Any vector $x \in \{0,1\}^n$ satisfying \eqref{eq:setcover2} induces a set cover $\mathscr{I}(x) = \{ I_j : x_j = 1\}$ of $W$.
Conversely, any set cover $\mathscr{I}' \subset \mathscr{I}$ induces a vector $x \in \{0,1\}^n$ satisfying \eqref{eq:setcover2} by setting $x_j = 1$ if $I_j \in \mathscr{I}'$ and $x_j=0$ otherwise.
Moreover, the objective value \eqref{eq:setcover1} is identical to the cardinality of $\mathscr{I}(x)$.
Thus, the TLFacP \eqref{eq:setcover1}--\eqref{eq:setcover3} solves the minimum set cover problem, resulting in the NP-hardness of TLFacP.
\end{proof}

\section{Finding Local Optima}

In the rest of the paper, we consider the TLFacP \eqref{eq:objective}--\eqref{eq:const} defined by $A^\pm = (a_{i,j}^\pm) \in \mathbb{R}_{\max}^{m \times n}$, $C = (c_{k,j}) \in \mathbb{R}_{\max}^{p \times n}$, $\mu^+ \in \mathbb{Z}_{+}^p$, and $\mu^- \in \mathbb{Z}_{+}^n$.
We define $A=(a_{i,j})$, where $a_{i,j}=a^+_{i,j}\oplus a^-_{i,j}$.
Each row of $A$ and $C$ is assumed to contain at least one finite entry; otherwise, such a row can be removed.
The feasible set $S(A^+,A^-)$ is simply denoted as $S$.
To avoid tropical division by $-\infty$, we only consider $x \in S$ such that  $x_j \neq \infty$ for all $j \in [n]$ with $\mu^-_j > 0$.
Taking the observation in Section~3 into account, we assume that $\sum_{k \in [p]} \mu^+_k = \sum_{j \in [n]} \mu^-_j$; otherwise, the problem is not bounded below.

Let $\mathcal{G}$ be the undirected graph with vertex sets $U=\{u_1,u_2,\dots,u_p\}$, $V = [n]$, and $W = \{w_1,w_2,\dots,w_m\}$, and edge set $E = \{ \{u_k,j\} : c_{k,j} \neq -\infty\} \cup \{ \{w_i,j\} : a_{i,j} \neq -\infty \}$.
If $\mathcal{G}$ is not connected, each connected component of $\mathcal{G}$ induces a smaller TLFacP that can be solved independently.
Hence, we assume that $\mathcal{G}$ is connected.

\subsection{Characterization of Local Optima}
First, we focus on the local structure of a feasible solution. 
We say that $x \in S$ is a local optimum if there exists $\delta > 0$ such that all $y \in S$ with $d(x,y) < \delta$ satisfy $f(y) \geq f(x)$.
Since $d(x,y) < \delta$ implies $\supp(x) = \supp(y)$, we can restrict our attention to solutions with fixed support when analyzing local optimality. In particular, by ignoring the columns of $A^+$, $A^-$, and $C$ not contained in the support, we hereafter assume that all entries of $x$ are finite, that is, $x \in \mathbb{R}^n$.

For $x \in S$, we define
\begin{align*}
	\epsilon_A(x) &= \pmin_{i \in [m], j_1,j_2 \in [n]} |(a_{i,j_1} + x_{j_1}) - (a_{i,j_2} + x_{j_2})|,	\\
	\epsilon_C(x) &= \pmin_{k \in [p], j_1,j_2 \in [n]} |(c_{k,j_1} + x_{j_1}) - (c_{k,j_2} + x_{j_2})|,		\\
	\epsilon(x) &= \min(\epsilon_A,\epsilon_C),
\end{align*}
where ``p-min'' stands for the positive-minimum: $\pmin_{i \in [n]} \alpha_i = \min\{ \alpha_i : \alpha_i > 0 \}$.
For any $J \subset [n]$, we observe that $\tilde{f}(\delta) := f(x + \delta\chi_J)$ is a usual affine function of $\delta$ for $0 < \delta < \epsilon_C(x)$.
Moreover, $x + \epsilon_A(x)\chi_J \in S$ implies $x + \delta\chi_J \in S$ for all $0 < \delta < \epsilon_A(x)$.

Theorem~\ref{thm:locmin} below states that the local optimality of $x \in S$ can be checked by vectors $x + \delta \chi_{J}$ for $J \subset [n]$ and small $\delta > 0$.
This fact is well known for the minimization of polyhedral L-convex functions~\cite{Murota2000}.
We expand this result to the minimization with tropical linear inequality constraints.
Indeed, we only use \Cref{prop:lconv} as properties of $f(x)$ in the following proof.

\begin{theorem}	\label{thm:locmin}
If $x \in S$ is not a local optimum, then there exists $J \subset [n]$ such that $x + \delta \chi_{J} \in S$ and $f(x + \delta \chi_{J}) < f(x)$ for all $0 < \delta < \epsilon(x)$.
\end{theorem}

\begin{proof}
If $x \in S$ is not a local optimum, then there exists $\xi \in \mathbb{R}^n$ such that $x + \xi \in S$, $\max_{j \in [n]} |\xi_j| < \epsilon(x)/2$, and $f(x + \xi) < f(x)$.
From \Cref{prop:linsys} and \Cref{prop:lconv}, we assume without loss of generality that $\xi_1 \geq \xi_2 \geq \cdots \geq \xi_n = 0$ by subtracting $\min_{j \in [n]} \xi_j$ from each entry and renumbering the indices.
Then, $\xi_j < \epsilon(x)$ for all $j \in [n]$.

Suppose $0 = \xi_n = \cdots = \xi_{r+1} < \xi_{r}$ and define $y := (\xi_r \otimes x) \oplus' (x + \xi) = x + \xi_r \chi_{[r]}$.
We first prove $y \in S$, that is, 
\begin{align}
	\max_{j \in [n]} (a^+_{i,j} + y_{j}) \geq \max_{j \in [n]} a^-_{i,j} + y_{j}		\label{eq:thmlocmin1}
\end{align}
for any $i \in [m]$.
Since $x \in S$, we have
\begin{align*}
	\max_{j \in [n]} (a^+_{i,j} + x_{j}) \geq a^-_{i,j_1} + x_{j_1}
\end{align*}
for any $j_1 \in [n]$.
If $j_1$ satisfies $\max_{j \in [n]} (a^+_{i,j} + x_j) > a^-_{i,j_1} + x_{j_1}$, then we have 
\begin{align*}
	\max_{j \in [n]} (a^+_{i,j} + x_{j}) \geq a^-_{i,j_1} + x_{j_1} + \epsilon(x)
\end{align*}
according to the definition of $\epsilon(x)$.
Then, considering $y_j \geq x_j$ for all $j \in [n]$ and $y_{j_1} - x_{j_1} \leq \xi_r < \epsilon(x)$, we obtain
\begin{align*}
	\max_{j \in [n]} (a^+_{i,j} + y_{j}) \geq \max_{j \in [n]} (a^+_{i,j} + x_{j}) \geq a^-_{i,j_1} + x_{j_1} + \epsilon(x)
	>  a^-_{i,j_1} + y_{j_1}.
\end{align*}
Next, suppose $j_1$ satisfies $\max_{j \in [n]} (a^+_{i,j} + x_j) = a^-_{i,j_1} + x_{j_1}$.
Recalling $x + \xi \in S$, we obtain 
\begin{align*}
	\max_{j \in [n]} (a^+_{i,j} + x_j + \xi_j) \geq a^-_{i,j_1} + x_{j_1} + \xi_{j_1}.
\end{align*}
Take $j_2 \in [n]$ such that $\max_{j \in [n]} (a^+_{i,j} + x_j + \xi_j) = a^+_{i,j_2} + x_{j_2} + \xi_{j_2}$.
Since $\xi_{j_2} < \epsilon(x)$, we must have $a^+_{i,j_2} + x_{j_2} = \max_{j \in [n]} (a^+_{i,j} + x_j)$, yielding $a^+_{i,j_2} + x_{j_2} = a^-_{i,j_1} + x_{j_1}$.
Combining this equality with 
\begin{align*}
	a^+_{i,j_2} + x_{j_2} + \xi_{j_2} = \max_{j \in [n]} (a^+_{i,j} + x_j + \xi_j) \geq a^-_{i,j_1} + x_{j_1} + \xi_{j_1},
\end{align*}
we obtain $\xi_{j_2} \geq \xi_{j_1}$.
Hence, we have 
\begin{align*}
a^+_{i,j_2} + x_{j_2} + \min(\xi_r,\xi_{j_2}) \geq a^-_{i,j_1} + x_{j_1} + \min(\xi_r,\xi_{j_1}),
\end{align*}
yielding 
\begin{align*}
	a^+_{i,j_2} + y_{j_2} \geq a^-_{i,j_1} + y_{j_1}.
\end{align*}
Thus, we obtain 
\begin{align*}
	\max_{j \in [n]} (a^+_{i,j} + y_{j}) \geq a^-_{i,j_1} + y_{j_1}
\end{align*}
for all $j_1 \in [n]$, leading to \eqref{eq:thmlocmin1}.

By \Cref{prop:linsys}, we observe that $z := (\xi_r \otimes x) \oplus (x + \xi) \in S$, and by \Cref{prop:lconv}, we have 
\begin{align*}
	f(x) + f(x+\xi) = f(\xi_r \otimes x) + f(x+\xi) 	\geq f(y) + f(z).
\end{align*}
Since $f(x+\xi) < f(x)$, we have $f(y) < f(x)$ or $f(z) < f(x)$.
When $f(y) < f(x)$, by recalling $y = x + \xi_r \chi_{[r]}$ and $\xi_r < \epsilon(x)$, we have $f(x + \delta \chi_{[r]}) < f(x)$ for all $\delta$ with $0 < \delta < \epsilon(x)$.
The fact that $x + \delta \chi_{[r]} \in S$ also holds for $0 < \delta < \epsilon(x)$.
Therefore, we can take $J := [r]$.
If $f(z) < f(x)$, we take $\xi' := (- \xi_r) \otimes (z-x)$ instead of $\xi$ and apply the above argument again.
Since the number of positive entries in $\xi'$ is strictly decreased from that of $\xi$, this iteration will eventually stop, thereby determining the desired subset $J$.
\end{proof}

For $x \in S$, a subset $J \subset [n]$ is called a feasible direction at $x$ if $x + \delta \chi_J \in S$ for $0 < \delta < \epsilon(x)$, and $J$ is called a descent direction at $x$ if $f(x + \delta \chi_J) < f(x)$ for $0 < \delta < \epsilon(x)$. The set of all feasible descent directions at $x$ is denoted by $\mathscr{D}(x)$.
The framework of the descent method to find a local optimum is described in \Cref{alg:base}.
In this algorithm, we do not need the whole set $\mathscr{D}(x)$, but we are only required the way to find a subset $J \in \mathscr{D}(x)$ or to detect $\mathscr{D}(x) = \emptyset$.
\begin{algorithm}[H]	\caption{Descent method}	\label{alg:base}
\begin{algorithmic}[1]  
    \STATE Find a feasible solution $x \in S$
    \WHILE{$\mathscr{D}(x) \neq \emptyset$}
    \STATE Take $J \in \mathscr{D}(x)$
    \STATE Determine the step size $\delta > 0$
    \STATE $x \leftarrow x + \delta \chi_J$
    \ENDWHILE
\end{algorithmic}
\end{algorithm}

To determine a feasible descent direction in a combinatorial way, we exploit the tangent digraph $\mathcal{G}(x)$ at $x \in S$. The digraph $\mathcal{G}(x)$ consists of three vertex sets, $U = \{u_1,u_2,\dots,u_p\}$, $V = [n]$, and $W=\{w_1,w_2,\dots,w_m\}$, corresponding to the rows of $C \otimes x$, entries of $x$, and rows of $A^+ \otimes x \geq A^- \otimes x$, respectively. The set of directed edges of $\mathcal{G}(x)$ is $E(x) = E_1(x) \cup E_2(x) \cup E_3(x)$, where 
\begin{align*}
	&E_{1}(x) = \{ (u_k,j) \in U \times V : (C \otimes x)_k = c_{k,j} + x_j \}, \\
	&E_{2}(x) = \{ (w_i,j) \in W \times V : (A \otimes x)_i = a^-_{i,j} + x_j \}, \\
	&E_{3}(x) = \{ (j,w_i) \in V \times W : (A \otimes x)_i = a^+_{i,j} + x_j \}.
\end{align*}
For $J \subset V$, let $\mathcal{N}_U(J,x)$ be the set of vertices in $U$ adjacent to some vertex in $J$, that is, 
\begin{align*}
\mathcal{N}_U(J,x) = \{ u_k \in U : (u_k,j) \in E_1(x) \text{ for some } j \in J\}.
\end{align*}
For $J \subset V$ and $I \subset W$, we similarly define 
\begin{align*}
	\mathcal{N}_W^-(J,x) &= \{ w_i \in W : (w_i,j) \in E_2(x) \text{ for some } j \in J\}, \\
	\mathcal{N}_W^+(J,x) &= \{ w_i \in V : (j,w_i) \in E_3(x) \text{ for some } j \in J\}, \\
	\mathcal{N}_V^-(I,x) &= \{ j \in V : (j,w_i) \in E_3(x) \text{ for some } w_i \in I\}, \\
	\mathcal{N}(J,x) &= \mathcal{N}_U(J,x) \cup \mathcal{N}_W^-(J,x) \cup \mathcal{N}_W^+(J,x).
\end{align*}
We simply denote $\mathcal{N}_U(\{j\},x)$ as $\mathcal{N}_U(j,x)$, and so forth.
Each vertex $u_k \in U$ is adjacent to at least one $j \in V$, and each vertex $w_i \in W$ satisfies $|\mathcal{N}_V^-(w_i,x)| \geq 1$ if $x \in S$.

\begin{figure}
\centering
\includegraphics[width=0.5\textwidth]{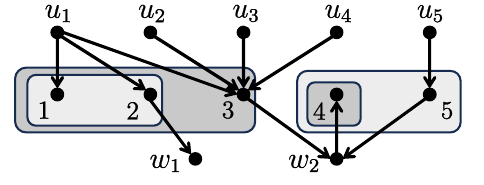}
\caption{Example of tangent digraph. When $\mu^+_k = \mu^-_j = 1$ for all $k$ and $j$, index sets $\{1,2\}$ and $\{4,5\}$ are feasible descent directions. The set $\{1,2,3\}$ is feasible but not descent, whereas $\{4\}$ is descent but not feasible.}	\label{fig:intro2}
\end{figure}

For $x \in S$ and $J \subset V$, we define the rate of increase by
\begin{align*}
	\mu(J,x) := \sum_{u_k \in \mathcal{N}_U(J,x)} \mu^+_k - \sum_{j \in J} \mu^-_j.
\end{align*}
The next theorem characterizes a feasible descent direction in terms of the tangent digraph, see also \Cref{fig:intro2}.
\begin{theorem}		\label{thm:fdd}
For $x \in S$ and $J \subset V$, we have $J \in \mathscr{D}(x)$ if and only if the following properties are satisfied.
\begin{enumerate}
\item For any $j \in J$ and $w_i \in \mathcal{N}^-_W(j,x)$, there exists $j' \in \mathcal{N}^-_V(w_i,x)$ such that $j' \in J$.
\item $\mu(J,x) < 0$.
\end{enumerate}
\end{theorem}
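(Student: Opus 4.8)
The plan is to prove the two conditions separately: Property~2 will characterize exactly when $J$ is a \emph{descent} direction, and Property~1 exactly when $J$ is a \emph{feasible} direction, so that their conjunction is precisely $J \in \mathscr{D}(x)$. Throughout I restrict to $0 < \delta < \epsilon(x)$, whose role is to freeze the combinatorics: in every row of $C$ and of $A = A^+ \oplus A^-$, a term that is strictly below the row maximum lies at least $\epsilon(x)$ below it, so raising a subset of coordinates by less than $\epsilon(x)$ cannot create a new maximizer. Consequently each row maximum changes by either $0$ or $\delta$, with slope $1$ exactly when one of its maximizers lies in $J$.

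For the descent direction I would differentiate $f$ along $\chi_J$. For $0 < \delta < \epsilon_C(x)$ the term $(C\otimes x)_k$ increases by $\delta$ precisely when some index $j$ with $(u_k,j)\in E_1(x)$ lies in $J$, that is, when $u_k \in \mathcal{N}_U(J,x)$, and is otherwise unchanged, while $\sum_j \mu^-_j x_j$ increases by $\delta\sum_{j\in J}\mu^-_j$. Hence $f(x+\delta\chi_J) - f(x) = \delta\,\mu(J,x)$, an affine function of $\delta$ that is negative for small $\delta>0$ if and only if $\mu(J,x) < 0$. This establishes ``$J$ is a descent direction $\iff$ Property~2.''

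For feasibility I would analyze each constraint $i$ through $L_i(\delta) := \max_j(a^+_{i,j}+x_j+\delta[j\in J])$ and $R_i(\delta) := \max_j(a^-_{i,j}+x_j+\delta[j\in J])$, so that $x+\delta\chi_J \in S$ means $L_i(\delta) \geq R_i(\delta)$ for every $i$. For $x \in S$ one has $L_i(0) = (A\otimes x)_i$, and the key structural observation is that every index attaining the row maximum $(A\otimes x)_i$ appears at $w_i$ either as an $E_3$-edge, when its $A^+$-entry attains the maximum, i.e.\ as a member of $\mathcal{N}^-_V(w_i,x)$, or as an $E_2$-edge, when its $A^-$-entry does, i.e.\ contributing $w_i$ to $\mathcal{N}^-_W$, or both. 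When constraint $i$ is inactive, $E_2$ places no edge at $w_i$ and the positive slack $L_i(0) - R_i(0)$ keeps the inequality strict for $\delta < \epsilon_A(x)$, after a short case split according to whether the maximizing $A^-$-index simultaneously raises the $A^+$-side. When constraint $i$ is active, $L_i(0) = R_i(0) = (A\otimes x)_i$ and both sides start equal; the right side rises with slope $1$ exactly when some $A^-$-maximizer lies in $J$, i.e.\ $w_i \in \mathcal{N}^-_W(J,x)$, and the left side rises exactly when some $A^+$-maximizer lies in $J$, i.e.\ $\mathcal{N}^-_V(w_i,x)\cap J \neq \emptyset$. Since both slopes lie in $\{0,1\}$, the inequality $L_i(\delta)\geq R_i(\delta)$ survives if and only if every rise of the right side is matched by a rise of the left side, which is exactly Property~1 quantified over all $j \in J$ and $w_i \in \mathcal{N}^-_W(j,x)$.

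The main obstacle is the feasibility equivalence, and specifically that the edge sets $E_2,E_3$ record only the \emph{merged} maximizers of $A = A^+\oplus A^-$, whereas feasibility is governed by the separate $A^+$- and $A^-$-row maxima. Reconciling these requires the dichotomy above, namely that a merged maximizer is an $A^+$-maximizer, an $A^-$-maximizer, or both, together with the $\epsilon_A(x)$-gap between distinct merged heights, in order to guarantee that no slope change occurs at a non-maximizing index and that inactive constraints cannot be broken. The ``only if'' direction is the most delicate: when Property~1 fails at an active constraint I must exhibit a violation, and here one uses that an $A^-$-maximizer in $J$ forces $R_i$ to slope exactly $1$ while the absence of any $A^+$-maximizer in $J$ pins $L_i$ at $(A\otimes x)_i$, so that $L_i(\delta) < R_i(\delta)$. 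Keeping this maximizer bookkeeping consistent with $E_1,E_2,E_3$ is where the argument demands the most care.
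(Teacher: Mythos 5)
Your proposal is correct and follows essentially the same route as the paper's proof: Property~2 is matched to the descent condition via the slope computation $f(x+\delta\chi_J)-f(x)=\delta\,\mu(J,x)$, and Property~1 to feasibility via a per-constraint analysis of active versus inactive rows of $A^+\otimes x \geq A^-\otimes x$. Your extra care about the fact that $E_2(x),E_3(x)$ are defined through the merged matrix $A=A^+\oplus A^-$ (the case split on whether a maximizer of the merged row is an $A^-$-maximizer, an $A^+$-maximizer, or both) is a point the paper's own proof passes over silently, but it resolves in the same way and does not change the argument.
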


\begin{proof}
First, we focus on the $i$th row of the constraints~\eqref{eq:const}.
If $(A^+ \otimes x)_i > (A^- \otimes x)_i$, then a small change in $x$ does not disrupt the validity of the inequality.
Otherwise, we consider any $j \in [n]$ such that $(A^+ \otimes x)_i = a^-_{i,j} + x_j $.
If a subset $J \subset [n]$ contains $j$, then the vector $x+\delta \chi_J$ for $0 < \delta < \epsilon(x)$ satisfies the $i$th row of~\eqref{eq:const} if and only if there exists $j' \in J$ such that $(A^+ \otimes x)_i = a^+_{i,j'} + x_{j'}$.
Hence, $J$ is a feasible direction if and only if the above argument is valid for all $i \in [m]$.
In terms of the tangent digraph, it is equivalent to the condition that if there exists an edge $(w_i,j) \in E_2(x)$ with $j \in J$, then there also exists an edge $(j',w_i) \in E_3(x)$ for some $j' \in J$.
Thus, Property 1 of the theorem is equivalent to the feasibility of $J$.

If we add $\delta \chi_J$ to $x$, where $0 < \delta < \epsilon(x)$, the value of $(C \otimes x)_k$ is increased by $\delta$ if and only if there exists $j \in J$ such that $(C \otimes x)_k = c_{k,j} + x_j$, that is, $(u_k,j) \in E_1(x)$.
Thus, the change in the objective value is examined as
\begin{align*}
	f(x + \delta \chi_J) - f(x) = \delta\left(\sum_{u_k \in N_U(J,x)} \mu^+_k - \sum_{j \in J} \mu^-_j\right) = \delta\,\mu(J,x).
\end{align*}
This means that $J$ is a descent direction if and only if $\mu(J,x) < 0$.
\end{proof}

For $J \in \mathscr{D}(x)$ and $0 < \delta < \epsilon(x)$, we observe that $J$ is also a feasible descent direction at $x + \delta \chi_J$. We can increase $\delta$ until a new edge appears in the tangent digraph. This observation suggests a reasonable choice of the step size
\begin{align}
\begin{aligned}
	\delta(J,x) := 
	&\min (\pmin_{i \in [m]} (\max_{j \not\in J} (a_{i,j} + x_j) - \max_{j \in J} (a_{i,j} + x_j)),\\
	&\qquad\qquad\qquad \pmin_{k \in [p]} (\max_{j \not\in J} (c_{k,j} + x_j) - \max_{j \in J} (c_{k,j} + x_j))).	\label{eq:stepsize}
\end{aligned}
\end{align}
We formally define $\delta(J,x) = \infty$ if $\max_{j \not\in J} (a_{i,j} + x_j) \leq \max_{j \in J} (a_{i,j} + x_j)$ for all $i \in [m]$ and $\max_{j \not\in J} (c_{k,j} + x_j) \leq \max_{j \in J} (c_{k,j} + x_j)$ for all $k \in [p]$.
If $\delta(J,x) = \infty$ for some $J \in \mathscr{D}(x)$, then TLFacP \eqref{eq:objective}--\eqref{eq:const} is not bounded below because we can make the objective value arbitrarily small by moving along $\chi_J$.

\begin{proposition}		\label{prop:connected}
For any $x \in S$, there exists $x' \in S$ such that $f(x') \leq f(x)$ and $\mathcal{G}(x')$ is connected, or the problem is not bounded below.
\end{proposition}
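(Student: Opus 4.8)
The plan is to argue by induction on the number of connected components of the tangent digraph $\mathcal{G}(x)$, showing that whenever $\mathcal{G}(x)$ is disconnected we can either detect unboundedness or pass to a new feasible point with no larger objective value whose tangent digraph has strictly fewer components. Write $\mathcal{C}_1,\dots,\mathcal{C}_q$ for the components and set $J_l := V \cap \mathcal{C}_l$. The first observation is that each $J_l$ is a feasible direction: for $w_i \in \mathcal{N}_W^-(J_l,x)$ the closed neighborhood of $w_i$ lies entirely in $\mathcal{C}_l$, so the in-neighbor guaranteed by $x \in S$ (namely some vertex witnessing $|\mathcal{N}_V^-(w_i,x)| \ge 1$) already belongs to $J_l$, which is precisely Property~1 of \Cref{thm:fdd}. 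Hence moving along $+\chi_{J_l}$ preserves feasibility, and the natural step to take is $\delta(J_l,x)$ from \Cref{eq:stepsize}.

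The second ingredient is a balance identity. Because every $u_k$ has at least one $E_1$-edge, the sets $\mathcal{N}_U(J_l,x) = U \cap \mathcal{C}_l$ partition $U$, while the $J_l$ partition $V$; together with the standing assumption $\sum_k \mu^+_k = \sum_j \mu^-_j$ this gives $\sum_{l} \mu(J_l,x) = 0$. Consequently either some component satisfies $\mu(J_l,x) < 0$, or every component satisfies $\mu(J_l,x) = 0$. In the first case $J_l \in \mathscr{D}(x)$; if $\delta(J_l,x) = \infty$ the problem is unbounded below by the remark following \Cref{eq:stepsize}, and otherwise $x' := x + \delta(J_l,x)\,\chi_{J_l}$ satisfies $f(x') < f(x)$.

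For the all-flat case I would prove the following \emph{key lemma}: if a component $\mathcal{C}_l$ does not contain all of $U$, then $\delta(J_l,x) < \infty$. Indeed, $\delta(J_l,x) = \infty$ forces $\max_{j\in J_l}(c_{k,j}+x_j) \ge \max_{j\notin J_l}(c_{k,j}+x_j)$ for every $k$; for a $u_k \notin \mathcal{C}_l$ the maximum $(C \otimes x)_k$ is attained outside $J_l$, so this inequality upgrades to equality and produces an $E_1$-edge from $u_k$ into $J_l$, contradicting $u_k \notin \mathcal{C}_l$. Since $q \ge 2$, at most one component can contain $U$, so some $J_l$ has finite step, and moving $x' := x + \delta(J_l,x)\chi_{J_l}$ leaves $f$ unchanged because the slope $\mu(J_l,x) = 0$ stays constant until $\delta(J_l,x)$.

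It remains to check that the step genuinely merges components without losing feasibility, which I expect to be the main technical obstacle. As $\delta$ increases to $\delta(J_l,x)$ all terms indexed by $J_l$ rise uniformly, so every active edge lying wholly inside $\mathcal{C}_l$ or wholly outside it is preserved; no edge is destroyed, and at $\delta(J_l,x)$ a new active edge appears joining $J_l$ to a vertex of another component, strictly decreasing $q$. The delicate point is feasibility at the endpoint when the new edge is an $E_2$-edge at some row $w_i \notin \mathcal{C}_l$: here I would use that $x \in S$ forces $(A \otimes x)_i = (A^+ \otimes x)_i$, attained at an $E_3$-neighbor outside $J_l$, so that $(A^+ \otimes x')_i = (A \otimes x')_i \ge (A^- \otimes x')_i$ is retained throughout the move. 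Feeding $x'$ back into the induction, and noting that $q$ drops at each step while $f$ never increases, yields the claim.
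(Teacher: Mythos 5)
Your overall strategy is the same as the paper's: decompose $\mathcal{G}(x)$ into components, observe that each $J_l = V \cap \mathcal{C}_l$ is a feasible direction with $\sum_l \mu(J_l,x) = 0$, and merge components by moving a step $\delta(J_l,x)$ along a descent or flat direction. The feasibility and merging analysis at the end is sound. However, your \emph{key lemma} for the all-flat case is false, and its proof breaks precisely where the matrices have $-\infty$ entries. The step ``$\delta(J_l,x)=\infty$ forces $\max_{j\in J_l}(c_{k,j}+x_j) \ge \max_{j\notin J_l}(c_{k,j}+x_j)$ for every $k$'' fails when $c_{k,j}=-\infty$ for all $j\in J_l$: then the difference $\max_{j\notin J_l}(c_{k,j}+x_j)-\max_{j\in J_l}(c_{k,j}+x_j)$ is $+\infty$, so it contributes nothing finite to the positive minimum in \Cref{eq:stepsize} and no equality (hence no $E_1$-edge into $J_l$) is forced. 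Concretely, take $p=n=2$, $m=0$, $c_{1,1}=0$, $c_{1,2}=-1$, $c_{2,1}=-\infty$, $c_{2,2}=0$, $x=(0,0)$: the components are $\{u_1,1\}$ and $\{u_2,2\}$, the component $\mathcal{C}_1=\{u_1,1\}$ does not contain $u_2$, yet $\delta(\{1\},x)=\infty$ because the only positive difference (from row $2$) is $+\infty$. Your lemma also silently ignores the $A$-rows, which equally enter $\delta(J,x)$.

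The conclusion you actually need in the flat case --- that \emph{some} component admits a finite step --- is still true, but for a different reason, and this is how the paper argues: since the support graph $\mathcal{G}$ is connected while $\mathcal{G}(x)$ is not, there is an edge of $\mathcal{G}$ (a finite entry $q_{\ell,j}$) joining a row-vertex of one component $\mathcal{C}_a$ to a column $j$ of another component $\mathcal{C}_b$; since that row's maximum is attained in $\mathcal{C}_a$ at a finite value and $q_{\ell,j}+x_j$ is finite but strictly smaller, row $\ell$ contributes a finite positive term to $\delta(J_b,x)$. Equivalently, the paper fixes a single component $J$ and its complement $J'=[n]\setminus J$ and observes that connectivity of $\mathcal{G}$ forces at least one of $\delta(J,x)$, $\delta(J',x)$ to be finite. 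Replacing your key lemma by this connectivity argument repairs the proof; as written, the lemma is the one genuinely incorrect step.
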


\begin{proof}
If $\mathcal{G}(x)$ is connected, the assertion is trivial.
Suppose $\mathcal{G}(x)$ is not connected. Let $\mathcal{H}$ be a connected component of $\mathcal{G}(x)$ and $J$ be the set of the vertices of $\mathcal{H}$ belonging to $V$.
Then, both $J$ and $J' := [n] \setminus J$ are feasible directions at $x$ because adding $\delta \chi_J$ or $\delta \chi_{J'}$ for $0 < \delta < \epsilon(x)$ does not change the structure of $\mathcal{G}(x)$. In particular, since $\mu(J,x) = -\mu(J',x)$ according to our assumption $\sum_{k \in [p]} \mu^+_k = \sum_{j \in [n]} \mu^-_j$, either $J$ or $J'$ is a descent direction, or $\mu(J,x) = \mu(J',x) = 0$. Suppose $J$ is a descent direction. If $\delta(J,x) \neq \infty$, then $\mathcal{H}$ will be connected to some other component in $\mathcal{G}(x + \delta(J,x) \chi_J)$. Otherwise, the problem is not bounded below. The case where $J'$ is a descent direction is similar. In the remaining case, where $\mu(J,x) = \mu(J',x) = 0$, at least one of $\delta(J,x)$ and $\delta(J',x)$ is assumed to be finite because $\mathcal{G}$ is connected. Hence, $\mathcal{H}$ is connected to some other component in either $\mathcal{G}(x + \delta(J,x) \chi_J)$ or $\mathcal{G}(x + \delta(J',x) \chi_{J'})$. By applying the above process repeatedly, we obtain $x' \in S$ such that $f(x') \leq f(x)$ and $\mathcal{G}(x')$ is connected, or the problem is not bounded below.
\end{proof}

\begin{remark}
If $\mathcal{G}(x)$ is connected, we can reconstruct the solution $x$ from the tangent digraph $\mathcal{G}(x)$ as follows.
Fix $x_1$ to any finite value. If $(u_k,1) \in E_1(x)$ and $(u_k,j) \in E_1(x)$ for some $k \in [p]$, then we have $c_{k,1}+x_1 = c_{k,j} + x_j$, resulting $x_j = c_{k,1} + x_1 - c_{k,j}$. Other entries of $x$ can be determined recursively in a similar way.
If the TLFacP \eqref{eq:objective}--\eqref{eq:const} is bounded below, then there exists a global optimum $x$ such that $\mathcal{G}(x)$ is connected by \Cref{prop:connected}.
Thus, a global optimum is obtained by enumerating all spanning trees of $\mathcal{G}$ and constructing the corresponding vectors.
Although some vectors may be infeasible, we can find a feasible solution with the minimum objective value among them.
Such a solution is a global optimum, if it exists.
The number of spanning trees of $\mathcal{G}$ rapidly grows as $n$, $m$, and/or $p$ get larger.
If all entries of $A$ and $C$ are finite, then $\mathcal{G}$ is a complete bipartite graph with vertices $V$ and $U \cup W$, which has $n^{m+p-1} (m+p)^{n-1}$ spanning trees, e.g.,~\cite{Jungnickel2013}.
Hence, efficient algorithms to find a local optimum are still meaningful.
\end{remark}

\subsection{Finding Feasible Descent Directions by Minimum $s$-$t$ Cut Problem}

We now describe a method for determining a feasible descent direction. 
We extend the tangent digraph $\mathcal{G}(x)$ to $\overline{\mathcal{G}}(x)$ by appending two vertices $s$ and $t$ and edges $(s,u_k)$ for $k \in [p]$ and $(j,t)$ for $j \in [n]$. 
Subsequently, we assign capacities to the edges of $\overline{\mathcal{G}}(x)$.
The capacities of $(s,u_k)$ and $(j,t)$ are set to $\mu^+_k$ and $\mu^-_j$, respectively, while those of edges in $E(x)$ are $D := \sum_{k\in[p]}\mu^+_k$.

Let $\mathfrak{G}(x)$ be the set of all subgraphs of $\overline{\mathcal{G}}(x)$ obtained by removing edges from $E_3(x)$ so that each $w_i$ has exactly one incoming edge.
An $s$-$t$ cut of $\mathcal{H} \in \mathfrak{G}(x)$ is a pair of vertex sets $S$ and $T$ such that $s \in S$, $t \in T$, $S \cap T = \emptyset$, and $S \cup T = U \cup V \cup W \cup \{s,t\}$.
The capacity $C(S,T)$ of an $s$-$t$ cut $(S,T)$ is the sum of the capacities of all edges $(s_1,t_1)$ of $\mathcal{H}$ with $s_1 \in S$ and $t_1 \in T$.

\begin{theorem}	\label{thm:stcut}
A feasible solution $x \in S$ is a local optimum if and only if the capacity of the minimum $s$-$t$ cut coincides with $D$ for any $\mathcal{H} \in \mathfrak{G}(x)$.
If $x \in S$ is not a local optimum, a feasible descent direction is given by $J = T \cap V$, where $(S, T)$ is the minimum $s$-$t$ cut with $C(S, T) < D$ for some $\mathcal{H} \in \mathfrak{G}(x)$.
\end{theorem}

\begin{proof}
Let $(S,T)$ be the minimum $s$-$t$ cut on $\mathcal{H} \in \mathfrak{G}(x)$ and suppose $C(S,T) < D$.
Since the capacities of edges in $E(x)$ are $D$, these edges from $S$ to $T$ are not included in the cutset.
Thus, we have
\begin{align}
	C(S,T) = \sum_{j \in S \cap V} \mu^-_j + \sum_{u_k \in T \cap U} \mu^+_k. 	\label{eq:mincut1}
\end{align}
If $j \in T \cap V$, then $\mathcal{N}^-_W(j,x) \subset T$. Subsequently, for any $w_i \in \mathcal{N}^-_W(j,x)$, the unique vertex $j' \in \mathcal{N}^-_V(w_i,x)$ is contained in $T$. Thus, $J := T \cap V$ satisfies Property 1 of \Cref{thm:fdd}. 
Additionally, since $\mathcal{N}_U(J,x) \subset T$, we have
\begin{align}
	\sum_{u_k \in \mathcal{N}_U(J,x)} \mu^+_k \leq \sum_{u_k \in T \cap U} \mu^+_k.		\label{eq:mincut2}
\end{align}
By combining \eqref{eq:mincut1} and \eqref{eq:mincut2} and considering $C(S,T) < D = \sum_{j \in [n]} \mu^-_j$, we obtain
\begin{align*}
	\sum_{u_k \in \mathcal{N}_U(J,x)} \mu^+_k 
	< \sum_{j \in [n]} \mu^-_j - \sum_{j \in S \cap V} \mu^-_j
	= \sum_{j \in J} \mu^-_j,
\end{align*}
which implies $\mu(J,x) < 0$. 
Thus, we find $J \in \mathscr{D}(x)$ by \Cref{thm:fdd}, implying that $x$ is not a local optimum.

Conversely, assume that there exists $J \in \mathscr{D}(x)$. 
According to \Cref{thm:fdd}, we choose $\mathcal{H} \in \mathfrak{G}(x)$ such that for any $j \in J$ and $w_i \in \mathcal{N}^-_W(j,x)$ some edge $(j',w_i) \in E_3(x)$ with $j' \in J$ is contained in $\mathcal{H}$. 
In this subgraph, all vertices in $V$ that can reach $J$ are contained in $J$. Hence, $\mathcal{N}_U(J,x)$ is the set of vertices in $U$ that can reach $J$. 
Let $T$ be the union of $\{t\}$ and the set of all vertices that can reach $J$, including $J$ itself and excluding $s$, and let $S$ be the set of the other vertices.
Then, the set of all edges from $S$ to $T$ is $\{ (s,u_k) : u_k \in \mathcal{N}_U(J,x)\} \cup \{ (j,t) : j \not\in J\}$. 
The capacity of the cut is given by
\begin{align*}
	C(S,T) = \sum_{u_k \in \mathcal{N}_U(J,x)} \mu^+_k + \sum_{j \not\in J} \mu^-_j.
\end{align*}
From Theorem~\ref{thm:fdd}, we have $\mu(J,x) = \sum_{u_k \in \mathcal{N}_U(J,x)} \mu^+_k - \sum_{j \in J} \mu^-_j < 0$.
Thus, we obtain
\begin{align*}
	C(S,T) < \sum_{j \in J} \mu^-_j + \sum_{j \not\in J} \mu^-_j = D,
\end{align*}
which implies that the capacity of the minimum $s$-$t$ cut is less than $D$.
The final statement of the theorem is straightforward from the proof.
\end{proof}

By using \Cref{thm:stcut}, we can determine a feasible descent direction in at most $|\mathfrak{G}(x)| \cdot \mathrm{MinCut}(m+n+p+2,|E(x)|+m+n)$ time, where $\mathrm{MinCut}(n',r')$ is the complexity to solve the minimum $s$-$t$ cut problem with $n'$ vertices and $r'$ edges.
The minimum $s$-$t$ cut problem can be efficiently solved; for example, $\mathrm{MinCut}(n',r') = O(n'r')$ by using an algorithm for the maximum flow problem~\cite{Orlin2013}.
On the other hand, the size of $\mathfrak{G}(x)$ may be exponential.

\section{Non-degenerate Problems}

Although it is hard to efficiently find a feasible descent direction in general, we devise a polynomial-time algorithm when the problem is non-degenerate. The degeneracy of the problem is defined in terms of tropical determinants.
The determinant of a tropical matrix $P = (p_{i,j}) \in \mathbb{R}_{\max}^{n \times n}$ is defined by
\begin{align*}
	\det P := \bigoplus_{\sigma \in S_n} \bigotimes_{i=1}^n p_{i,\sigma(i)} = \max_{\sigma \in S_n} \sum_{i=1}^{n} p_{i,\sigma(i)},
\end{align*}
where $S_n$ is the symmetric group of degree $n$.
The tropical determinant is also called the tropical permanent to distinguish it from the determinant in the symmetrized tropical algebra, which admits tropical signs~\cite{Plus1990}.
If at least two permutations attain the maximum on the right-hand side at the same time, then $P$ is called singular.
Let $Q = \begin{psmallmatrix} C \\ A \end{psmallmatrix} \in \mathbb{R}_{\max}^{(p+m) \times n}$. The TLFacP \eqref{eq:objective}--\eqref{eq:const} is called degenerate if some square submatrix $\hat{Q}$ of $Q$ is singular but $\det \hat{Q} \neq -\infty$; otherwise called non-degenerate.

\begin{proposition}		\label{prop:degene}
If the TLFacP \eqref{eq:objective}--\eqref{eq:const} is non-degenerate, then the underlying undirected graph of $\mathcal{G}(x)$ has no cycle for any $x \in S$.
\end{proposition}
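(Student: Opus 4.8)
The plan is to argue by contraposition: I would assume that the underlying undirected graph of $\mathcal{G}(x)$ contains a (simple) cycle for some $x \in S$, and from it extract a square submatrix of $Q = \begin{psmallmatrix} C \\ A \end{psmallmatrix}$ that is singular with finite determinant, contradicting non-degeneracy. The first move is to rewrite the edges incident to $W$ in a uniform way. An edge incident to $w_i$ coming from $E_2(x)$, say $(w_i,j)$, forces $a^-_{i,j}+x_j = (A\otimes x)_i \ge a_{i,j}+x_j \ge a^-_{i,j}+x_j$, hence $a_{i,j}+x_j = (A\otimes x)_i$; symmetrically for an $E_3(x)$ edge with $a^+$. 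Conversely $a_{i,j}+x_j=(A\otimes x)_i$ produces an edge in $E_2(x)$ or $E_3(x)$ according to which of $a^{\pm}_{i,j}$ realizes $a_{i,j}$. Thus, indexing the rows of $Q$ by $U\cup W$, the underlying undirected graph is exactly the bipartite tangency graph on $(U\cup W)\sqcup V$ whose edge $\{r,j\}$ is present precisely when $q_{r,j}+x_j = M_r$, where $M_r := \max_{j'}(q_{r,j'}+x_{j'})$. Each $M_r$ is finite, since every row of $Q$ has a finite entry and $x\in\mathbb{R}^n$.

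Next I would turn a cycle into two competing optimal permutations. A simple cycle in this bipartite graph has the form $j_1-r_1-j_2-\cdots-j_\ell-r_\ell-j_1$ with distinct columns $j_1,\dots,j_\ell\in V$, distinct rows $r_1,\dots,r_\ell\in U\cup W$, and $\ell\ge 2$, where $r_s$ is adjacent to both $j_s$ and $j_{s+1}$ (indices modulo $\ell$). Let $\hat{Q}$ be the $\ell\times\ell$ submatrix on these rows and columns, and consider the two bijections $\sigma(r_s)=j_s$ and $\tau(r_s)=j_{s+1}$. They are distinct because $\sigma(r_1)=j_1\neq j_2=\tau(r_1)$. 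Using the edge relations $q_{r_s,j_s}=M_{r_s}-x_{j_s}$ and $q_{r_s,j_{s+1}}=M_{r_s}-x_{j_{s+1}}$, and the fact that $\{j_{s+1}\}=\{j_s\}$ as index sets, both $\sigma$ and $\tau$ have weight $\sum_s M_{r_s}-\sum_s x_{j_s}$.

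The crux is to confirm that this common value is genuinely the tropical determinant, i.e. that $\sigma$ (hence $\tau$) is optimal among \emph{all} permutations of $\hat{Q}$, not merely compared to one another. For any bijection $\pi$ from these rows to these columns, the row-maximum bound $q_{r_s,\pi(r_s)}+x_{\pi(r_s)}\le M_{r_s}$ gives $\sum_s q_{r_s,\pi(r_s)}\le \sum_s M_{r_s}-\sum_s x_{\pi(r_s)}=\sum_s M_{r_s}-\sum_s x_{j_s}$, which is exactly the weight of $\sigma$. Hence $\det\hat{Q}$ equals this value and is attained by the two distinct permutations $\sigma\neq\tau$, so $\hat{Q}$ is singular; moreover $\det\hat{Q}=\sum_s q_{r_s,j_s}$ is a finite sum of finite entries, so $\det\hat{Q}\neq-\infty$. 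This contradicts non-degeneracy, completing the argument.

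I expect the two delicate points to be the uniform reformulation of the $W$-incident edges in the first step (so that $E_2(x)$ and $E_3(x)$ collapse to a single tangency condition on $A=A^+\oplus A^-$, which is what lets a cycle live inside a single matrix $Q$) and the optimality check in the last step. The latter is the real obstacle: one must invoke the row maximum $M_{r_s}$ taken over \emph{all} columns in $[n]$ rather than only those appearing in the cycle, otherwise the inequality bounding an arbitrary $\pi$ would not be available.
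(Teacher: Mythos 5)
Your proof is correct and takes essentially the same route as the paper's: from a cycle you extract two distinct bijections $\sigma,\tau$ on the square submatrix $\hat{Q}$ of $Q$ indexed by the cycle's rows and columns, show both attain the common value $\sum_s M_{r_s}-\sum_s x_{j_s}$, and verify via the row maxima (taken over all of $[n]$) that this value is $\det\hat{Q}$ and is finite, contradicting non-degeneracy. Your explicit observation that an $E_2(x)$ or $E_3(x)$ edge forces tangency with respect to $A=A^+\oplus A^-$, so that all cycle edges live in the single matrix $Q$, is a detail the paper leaves implicit but uses in exactly the same way.
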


\begin{proof}
We identify $\mathcal{G}(x)$ and its underlying undirected graph without any confusion.
For simplicity, we rename vertices $u_1, u_2, \dots, u_p$ and $w_1, w_2, \dots, w_m$ as $\overline{1}, \overline{2}, \dots, \overline{p+m}$.
To show the contraposition, let $(j_1, \overline{\ell_1}, j_2, \overline{\ell_2}, \dots, \overline{\ell_{s}}, j_{s+1})$ be a cycle in $\mathcal{G}(x)$, where $j_{s+1} = j_1$.
Since consecutive vertices in the cycle indicate edges in $\mathcal{G}(x)$, we have
\begin{align*}
	(Q \otimes x)_{\ell_i} = q_{\ell_i,j_i} + x_{j_i} = q_{\ell_i,j_{i+1}} + x_{j_{i+1}}
\end{align*}
for $i \in [s]$. Noting that $x_{j_1} = x_{j_{s+1}}$, we obtain
\begin{align}
	\sum_{i=1}^s q_{\ell_i,j_i} = \sum_{i=1}^s q_{\ell_i,j_{i+1}}.		\label{eq:degene}
\end{align}
Let us consider $s \times s$ submatrix $\hat{Q}$ of $Q$ whose rows and columns are indexed by $L = \{\ell_1, \ell_2, \dots, \ell_s\}$ and $J = \{j_1,j_2, \dots, j_s\}$, respectively.
We define bijections $\sigma_1, \sigma_2 : L \to J$ by $\sigma_1(\ell_i) = j_i$ and $\sigma_2(\ell_i) = j_{i+1}$ for $i \in [s]$.
Then, \eqref{eq:degene} can be rewritten as
\begin{align*}
	\sum_{\ell \in L} q_{\ell,\sigma_1(\ell)} = \sum_{\ell \in L} q_{\ell,\sigma_2(\ell)}.
\end{align*}
For any bijection $\tau: L \to J$, we have
\begin{align*}
	q_{\ell_i, \tau(\ell_i)} + x_{\tau(\ell_i)} \leq (Q \otimes x)_{\ell_i} = q_{\ell_i,j_i} + x_{j_i}
\end{align*}
for $i \in [s]$, which yields
\begin{align*}
	\sum_{\ell \in L} q_{\ell,\tau(\ell)} \leq \sum_{\ell \in L} q_{\ell,\sigma_1(\ell)}.
\end{align*}
Hence, we have
\begin{align*}
	\det \hat{Q} = \sum_{\ell \in L} q_{\ell,\sigma_1(\ell)} = \sum_{\ell \in L} q_{\ell,\sigma_2(\ell)},
\end{align*}
which implies that $\hat{Q}$ is singular. Moreover, we have $\det\hat{Q} \neq -\infty$ because every edge of $\mathcal{G}(x)$ comes from a finite entry of $Q$. Thus, we conclude that the TLFacP \eqref{eq:objective}--\eqref{eq:const} is degenerate.
\end{proof}

\begin{remark}
We observe that non-degenerate TLFacP is still NP-hard, although the homogenization of \eqref{eq:setcover1}--\eqref{eq:setcover3} in Section 3.4 is degenerate.
We modify this problem as follows.
\begin{alignat}{3} 
&\text{minimize}& \qquad & x_1\otimes x_2 \otimes \cdots \otimes x_n, &\quad &	\label{eq:setcover4}\\
&\text{subject to}& \qquad & \bigoplus_{j: I_j \ni i} (-2^{i(n+1)+j}) \otimes x_j \geq -2^{i(n+1)} + N,	& & i \in [m], \label{eq:setcover5}\\
& & & (-2^j) \otimes x_j \geq 0, & &j \in [n],	\label{eq:setcover6}
\end{alignat}
where $N = 2^{(mn+m+n) + \lceil \log_2n \rceil}$.
Note that the total bit length of the coefficients is $O(m^2n^2)$.
The optimal solution $x$ of \eqref{eq:setcover4}--\eqref{eq:setcover6} should satisfy 
\begin{align}
	x_j \in \{ 2^j,\, 2^{(n+1)}(2^j-1) + N,\, 2^{2(n+1)}(2^j-1) + N, \dots, 2^{m(n+1)}(2^j-1) + N\}	\label{eq:setcover7}
\end{align}
for all $j \in [n]$.
Let $x$ be a vector satisfying \eqref{eq:setcover5} and \eqref{eq:setcover7} and define $\mathscr{I}(x) = \{ I_j : x_j \neq 2^j\}$.
Then, $\mathscr{I}(x)$ is a set cover of $[m]$. Conversely, every set cover induces a vector $x$ satisfying \eqref{eq:setcover5} and \eqref{eq:setcover7}.
In addition, by noting that $n2^{m(n+1)}(2^n-1)< n2^{mn+m+n} \leq N$, we have
\begin{align*}
	N \cdot |\mathscr{I}(x)| \leq
	x_1\otimes x_2 \otimes \cdots \otimes x_n <
	N \cdot (|\mathscr{I}(x)| + 1).
\end{align*}
Hence, the optimal solution gives the minimum set cover.
Moreover, the homogenization of \eqref{eq:setcover4}--\eqref{eq:setcover6} is non-degenerate. Indeed, let us consider $Q = (q_{\ell,j}) = \begin{psmallmatrix} C \\ A \end{psmallmatrix}$ defined by $C=(c_{k,j}) \in \mathbb{R}_{\max}^{n \times (n+1)}$ and $A=(a_{i,j}) \in \mathbb{R}_{\max}^{(m+n) \times (n+1)}$ with
\begin{alignat*}{2}
	&c_{j,j} = 0 &\quad& \text{for $j \in [n]$}, \\
	&a_{i,j} = -2^{i(n+1)+j} & &\text{for $i \in I_j$, $i \in [m]$, $j \in [n]$}, \\
	&a_{i,n+1} = -2^{i(n+1)} + N & &\text{for $i \in [m]$}, \\
	&a_{j+m,j} = -2^j & &\text{for $j \in [n]$}, \\
	&a_{j+m,n+1} = 0 & &\text{for $j \in [n]$},
\end{alignat*}
and all other entries $-\infty$. 
It is remarkable that all exponents of $2$ are different except for $N = 2^{(mn+m+n) + \lceil \log_2n \rceil}$.
If the homogenization of \eqref{eq:setcover4}--\eqref{eq:setcover6} is degenerate, then there exist subsets $L \subset [m+2n]$ and $J \subset [n+1]$ with $|L|=|J|$, and bijections $\sigma_1,\sigma_2:L\to J$, such that $\sigma_1(\ell) \neq \sigma_2(\ell)$ for all $\ell \in L$ and 
\begin{align}
	\sum_{\ell \in L} q_{\ell,\sigma_1(\ell)} = \sum_{\ell \in L} q_{\ell,\sigma_2(\ell)}.	\label{eq:nphard1}
\end{align}
Let $2^\nu$ be the smallest power of $2$ appearing in $q_{\ell,\sigma_1(\ell)}$ or $q_{\ell,\sigma_2(\ell)}$ for $\ell \in L$.
Then, $2^\nu$ is included in exactly one side of \eqref{eq:nphard1}.
This implies that one side of \eqref{eq:nphard1} is divisible by $2^{\nu+1}$, but the other is not, which is a contradiction.
Thus, the concerning problem is non-degenerate.
\end{remark}

In the remainder of this paper, we assume that the TLFacP \eqref{eq:objective}--\eqref{eq:const} is non-degenerate. 
Let $x \in S$ and assume by \Cref{prop:connected} that $\mathcal{G}(x)$ is a spanning tree.
If we remove an edge $e$ from $\mathcal{G}(x)$, it is divided into two trees.
Let $\mathcal{T}(e,x)$ be one of such trees that contains the endpoint of $e$ belonging to $U$ or $W$.
For a subgraph $\mathcal{T}$ of $\mathcal{G}(x)$, let $U(\mathcal{T})$, $V(\mathcal{T})$, and $W(\mathcal{T})$ denote the sets of all the vertices of $\mathcal{T}$ belonging to $U$, $V$, and $W$, respectively. 
Additionally, we define $\mu(\mathcal{T})$ as $\mu(\mathcal{T}) := \sum_{u_k \in U(\mathcal{T})} \mu^+_k - \sum_{j \in V(\mathcal{T})} \mu^-_j$.
For $J \subset V$, let $\mathcal{G}(J,x)$ be a subgraph of $\mathcal{G}(x)$ induced by $J \cup \mathcal{N}(J,x)$.

\begin{lemma}		\label{lem:divide}
Let $x \in S$ and $e \in E(x)$.
Then, $J := V(\mathcal{T}(e,x))$ is a feasible direction unless $e = (j,w_i) \in E_3(x)$ and $|\mathcal{N}^-_V(w_i,x)| = 1$.
Furthermore, $\mu(J,x) = \mu(\mathcal{T}(e,x))$.
\end{lemma}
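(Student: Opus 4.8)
The plan is to argue entirely combinatorially, exploiting that $\mathcal{G}(x)$ is a spanning tree (by \Cref{prop:degene} together with the standing assumption justified in \Cref{prop:connected}), so that deleting the single edge $e$ splits $\mathcal{G}(x)$ into exactly two subtrees. For every adjacency appearing in \Cref{thm:fdd} I would decide, edge by edge, into which of the two subtrees the relevant vertex falls, carefully tracking the orientation of edges and the defining convention that $\mathcal{T}(e,x)$ is the subtree holding the endpoint of $e$ lying in $U$ or $W$ (so the $V$-endpoint of $e$ lies in the other subtree).

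First I would settle the ``furthermore'' clause. Since $V(\mathcal{T}(e,x)) = J$ by definition, it suffices to establish $\mathcal{N}_U(J,x) = U(\mathcal{T}(e,x))$, as this makes the two defining expressions for $\mu$ coincide termwise. For $\subseteq$, take $u_k \in \mathcal{N}_U(J,x)$ witnessed by $(u_k,j) \in E_1(x)$ with $j \in J$; this edge cannot equal $e$, for otherwise $\mathcal{T}(e,x)$ would contain its $U$-endpoint $u_k$ and force $j$ into the other subtree, contradicting $j \in J$. Hence $(u_k,j)$ survives in $\mathcal{G}(x)$ with $e$ removed and, as $j$ lies in $\mathcal{T}(e,x)$, so does $u_k$. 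The reverse inclusion is the same bookkeeping read in the opposite direction.

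Next I would verify Property 1 of \Cref{thm:fdd}. Fix $j \in J$ and $w_i \in \mathcal{N}^-_W(j,x)$, i.e.\ $(w_i,j) \in E_2(x)$; again this edge is not $e$ (otherwise $w_i \in \mathcal{T}(e,x)$ would push $j$ out of $J$), so $w_i \in W(\mathcal{T}(e,x))$. Because $x \in S$ the set $\mathcal{N}^-_V(w_i,x)$ is nonempty, and any $j' \in \mathcal{N}^-_V(w_i,x)$ with $(j',w_i) \ne e$ is dragged into $\mathcal{T}(e,x)$ together with $w_i$, hence into $J$, supplying the required witness. The sole way to have no witness is that \emph{every} $j' \in \mathcal{N}^-_V(w_i,x)$ satisfies $(j',w_i) = e$, which forces $e = (j',w_i) \in E_3(x)$ with $|\mathcal{N}^-_V(w_i,x)| = 1$ --- precisely the excluded case. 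Outside this case $J$ satisfies Property 1 and is therefore a feasible direction.

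The main obstacle is the recurring ``is the candidate witness edge equal to $e$?'' bookkeeping: whenever such a coincidence occurs, the orientation of $e$ and the convention defining $\mathcal{T}(e,x)$ dictate on which side each endpoint lands, and I must treat the three edge types $E_1, E_2, E_3$ uniformly so that the irreparable failure surfaces only from an $E_3$-edge whose head $w_i$ has a unique $E_3$-preimage. I would additionally examine the corner cases in which one subtree reduces to a single vertex, to pin down exactly when the identity $\mathcal{N}_U(J,x) = U(\mathcal{T}(e,x))$ is in force.
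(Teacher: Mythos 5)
Your argument is correct and is in substance the same as the paper's proof: both rest on the observation that removing $e$ from the spanning tree $\mathcal{G}(x)$ leaves exactly two components with the $V$-endpoint of $e$ falling outside $\mathcal{T}(e,x)$, and that the only irreparable failure of Property 1 of \Cref{thm:fdd} is a vertex $w_i$ whose unique incoming $E_3$-edge is $e$ itself --- the paper merely phrases this contrapositively while you verify the property directly. The corner case you flag for the identity $\mathcal{N}_U(J,x)=U(\mathcal{T}(e,x))$ (namely $e\in E_1(x)$ with a degree-one $U$-endpoint, so that $J=\emptyset$) is likewise left implicit in the paper's proof, so your write-up is, if anything, slightly more careful.
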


\begin{proof}
Contrarily, suppose $J$ is not a feasible direction.
From Property 1 of \Cref{thm:fdd}, there exist $j \in J$ and $w_i \in \mathcal{N}^-_W(j,x)$ such that $\mathcal{N}^-_V(w_i,x) \cap J = \emptyset$.
This occurs only when a unique edge entering $w_i$ is removed from $\mathcal{G}(x)$.
Moreover, as no edge $(u_k,j)$ with $j \in J$ is removed, we have $\sum_{u_k \in U(\mathcal{T})} \mu^+_k = \sum_{u_k \in \mathcal{N}_U(J)} \mu^+_k$, which leads to $\mu(J,x) = \mu(\mathcal{T}(e,x))$.
\end{proof}

From this observation, we define $E^{\circ}(x)$ as the set of all removal edges in $\mathcal{G}(x)$, that is,
\begin{align*}
	E^{\circ}(x) = E_1(x) \cup E_2(x) \cup \{ (j,w_i) \in E_3(x) : |\mathcal{N}^-_V(w_i,x)|\geq 2\}.
\end{align*}
The next theorem states that we only need to consider the trees obtained by removing one edge from $\mathcal{G}(x)$ when we look for a feasible descent direction.

\begin{theorem}		\label{thm:tree}
If $x \in S$ is not a local optimum, then there exists $e \in E^{\circ}(x)$ such that $V(\mathcal{T}(e,x)) \in \mathscr{D}(x)$.
\end{theorem}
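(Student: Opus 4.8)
The plan is to start from a single known feasible descent direction and distill it into one arising from a single edge removal. Since $x$ is not a local optimum, \Cref{thm:locmin} supplies some $J^* \subset [n]$ that is a feasible descent direction, so by \Cref{thm:fdd} it satisfies Property 1 (feasibility) and $\mu(J^*,x) < 0$. Recall that under the standing assumptions the tangent digraph is a spanning tree ($\mathcal{G}(x)$ is connected by \Cref{prop:connected} and acyclic by \Cref{prop:degene}). I would then consider the subforest $\mathcal{F}$ of $\mathcal{G}(x)$ induced on $J^* \cup \mathcal{N}(J^*,x)$. As $\mathcal{N}(J^*,x)$ collects exactly the $U$- and $W$-vertices adjacent to $J^*$, one has $V(\mathcal{F}) = J^*$ and $U(\mathcal{F}) = \mathcal{N}_U(J^*,x)$, hence $\mu(\mathcal{F}) = \mu(J^*,x) < 0$. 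Because $\mu$ is additive over the connected components of the forest $\mathcal{F}$, at least one component $\mathcal{C}$ satisfies $\mu(\mathcal{C}) < 0$.

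Next I would exploit the tree structure. Setting $J_{\mathcal{C}} := V(\mathcal{C})$, every $\mathcal{G}(x)$-neighbor of a vertex $j \in J_{\mathcal{C}}$ lies in $\mathcal{N}(J^*,x)$, hence in the same component $\mathcal{C}$; therefore no boundary edge of $\mathcal{C}$ has its $\mathcal{C}$-endpoint in $V$. Since $\mathcal{C}$ is a subtree of the spanning tree $\mathcal{G}(x)$, its complement splits into connected pieces $P_1,\dots,P_q$, each joined to $\mathcal{C}$ by exactly one boundary edge $e_\ell$, whose $\mathcal{C}$-endpoint lies in $U \cup W$ and whose $P_\ell$-endpoint lies in $V$. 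Consequently $\mathcal{T}(e_\ell,x)$ is precisely the $\mathcal{C}$-side $\mathcal{G}(x) \setminus P_\ell$, so $\mu(\mathcal{T}(e_\ell,x)) = \mu(\mathcal{G}(x)) - \mu(P_\ell) = -\mu(P_\ell)$, where $\mu(\mathcal{G}(x)) = \sum_k \mu^+_k - \sum_j \mu^-_j = 0$ by the standing balance condition. From $\mu(\mathcal{C}) + \sum_\ell \mu(P_\ell) = 0$ and $\mu(\mathcal{C}) < 0$ I get $\sum_\ell \mu(P_\ell) > 0$, so some $P_\ell$ has $\mu(P_\ell) > 0$; for that index, $\mu(\mathcal{T}(e_\ell,x)) < 0$.

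The crux is then to verify $e_\ell \in E^{\circ}(x)$. If $e_\ell \in E_1(x) \cup E_2(x)$ there is nothing to check. The delicate case is $e_\ell = (j_\ell, w_i) \in E_3(x)$ with $w_i \in \mathcal{C}$, where I must show $|\mathcal{N}^-_V(w_i,x)| \geq 2$. Here I would use that $w_i$ is joined to $\mathcal{C}$ by an interior edge to some $j'' \in J_{\mathcal{C}}$: if that edge lies in $E_3(x)$ it already provides a second incoming edge, while if it lies in $E_2(x)$ then Property 1 of \Cref{thm:fdd} applied to $J^*$ yields some $j''' \in \mathcal{N}^-_V(w_i,x) \cap J^*$, which belongs to $\mathcal{C}$ and differs from $j_\ell \notin \mathcal{C}$. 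Either way $|\mathcal{N}^-_V(w_i,x)| \geq 2$, so $e_\ell \in E^{\circ}(x)$. Finally \Cref{lem:divide} guarantees that $J := V(\mathcal{T}(e_\ell,x))$ is a feasible direction with $\mu(J,x) = \mu(\mathcal{T}(e_\ell,x)) < 0$, whence $J \in \mathscr{D}(x)$ by \Cref{thm:fdd}. I expect the main obstacle to be exactly this membership $e_\ell \in E^{\circ}(x)$ in the $E_3$ case, as it is the only step that genuinely requires the feasibility of $J^*$ rather than merely its negative rate of increase.
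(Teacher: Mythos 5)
Your proof is correct and takes essentially the same route as the paper's: pass to a connected piece of the subgraph induced by $J^* \cup \mathcal{N}(J^*,x)$ with negative rate of increase, split the remainder of the spanning tree into hanging components, pick one with positive rate, and verify that the unique connecting edge lies in $E^{\circ}(x)$ (with the delicate $E_3$ case resolved via Property 1 of \Cref{thm:fdd}), then conclude by \Cref{lem:divide}. The only cosmetic difference is that the paper first re-establishes that the connected component is itself a feasible descent direction before decomposing, whereas you carry only its negative $\mu$-value and invoke the feasibility of the original $J^*$ exactly where it is needed.
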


\begin{proof}
Since $x \in S$ is not a local optimum, we have $J \in \mathscr{D}(x)$.
First, we may assume that $\mathcal{G}(J,x)$ is connected.
Indeed, if it is not connected, let $\mathcal{G}_1$ be a connected component and define $J_1 := V(\mathcal{G}_1)$.
As $J$ satisfies Property 1 of \Cref{thm:fdd}, for any $j \in J$ and $w_i \in \mathcal{N}^-_W(j,x)$ there exists $j' \in \mathcal{N}^-_W(w_i,x) \cap J$. 
From the connectivity of $\mathcal{G}_1$, if $j \in J_1$, then $j' \in J_1$. Hence, $J_1$ satisfies Property 1 of \Cref{thm:fdd}, which means that $J_1$ is a feasible direction at $x$.
Similarly, $J_2 := [n] \setminus J_1$ is also feasible.
Noting that $\mathcal{N}_U(J_1,x) \cap \mathcal{N}_U(J_2,x) = \emptyset$, we have $\mu(J,x) = \mu(J_1,x) + \mu(J_2,x)$. 
Thus, recalling that $\mu(J) < 0$, we have $\mu(J_1,x) < 0$ or $\mu(J_2,x) < 0$, implying $J_1 \in \mathscr{D}(x)$ or $J_2 \in \mathscr{D}(x)$. Repeating this process, we obtain a feasible descent direction that induces a connected subgraph.

Next, let $\mathcal{H}$ be the subgraph of $\mathcal{G}(x)$ obtained by removing all the vertices in $\mathcal{G}(J,x)$ together with all the edges incident to them. Let $\mathcal{H}_1, \mathcal{H}_2, \dots, \mathcal{H}_s$ be the connected components of $\mathcal{H}$.
For each $\ell \in [s]$, there exists a unique edge $e_\ell$ that connects $\mathcal{H}_\ell$ to $\mathcal{G}(J,x)$.
Since $\mathcal{N}(J,x)$ is contained in $\mathcal{G}(J,x)$ and $J$ is a feasible direction, each edge $e_\ell$ takes one of the following forms:
\begin{enumerate}
\item $e_\ell = (u_k,j) \in E_1(x)$ with $u_k \in U(\mathcal{G}(J,x))$ and $j \in V(\mathcal{H}_\ell)$;
\item $e_\ell = (w_i,j) \in E_2(x)$ with $w_i \in W(\mathcal{G}(J,x))$ and $j \in V(\mathcal{H}_\ell)$;
\item $e_\ell = (j,w_i) \in E_3(x)$ with $w_i \in W(\mathcal{G}(J,x))$, $j \in V(\mathcal{H}_\ell)$, and $\mathcal{N}^-_V(w_i,x) \cap J \neq \emptyset$.
\end{enumerate}
In the last case, we have
\begin{align*}
	|\mathcal{N}^-_V(w_i,x)| \geq |\mathcal{N}^-_V(w_i,x) \cap J| + |\{j\}| \geq 2.
\end{align*}
Thus, $e_{\ell} \in E^{\circ}(x)$ for all $\ell \in [s]$.
We can take a component $\mathcal{H}_\ell$ satisfying $\mu(\mathcal{H}_\ell) > 0$.
Indeed, if $\mu(\mathcal{H}_\ell) \leq 0$ for all $\ell \in [s]$, then, by noting that $\mu(\mathcal{G}(J,x)) = \mu(J,x) < 0$, we have
\begin{align*}
	\mu(\mathcal{G}(J,x)) + \sum_{\ell \in [s]} \mu(\mathcal{H}_\ell) < 0.
\end{align*}
However, the left-hand side is identical to $\sum_{u_k \in U} \mu^+_k - \sum_{j \in V} \mu^-_j = 0$, which is a contradiction.
When we remove $e_\ell$ from $\mathcal{G}(x)$, we obtain two connected components: $\mathcal{H}_\ell$ and $\mathcal{T}(e_\ell,x)$. Then, $J^* := V(\mathcal{T}(e_\ell,x))$ is a feasible direction according to \Cref{lem:divide}. In addition, $\mu(J^*,x) = \mu(\mathcal{T}(e_\ell,x)) = -\mu(\mathcal{H}_\ell) < 0$.
Thus, we have found $J^* \in \mathscr{D}(x)$ as desired.
\end{proof}

\begin{example}
Let us consider a TLFacP \eqref{eq:objective}--\eqref{eq:const} with
\begin{gather*}
C = \begin{pmatrix} 2 & 0 & 0 \\ 1 & 2 & 0 \\ 0 & 0 & 7 \end{pmatrix},\
A^+ = \begin{pmatrix} -5 & -\infty & 0 \\ -1 & -\infty & -\infty \\ -\infty &-\infty & 0 \\ -\infty & -1 & -\infty \end{pmatrix},\
A^- = \begin{pmatrix} -\infty & -3 & -\infty \\ -\infty & -2 & 0 \\ -8 & -5 & -\infty \\ -6 & -\infty  & 0 \end{pmatrix}, \\
\mu^+ = \begin{pmatrix} 1 \\ 1 \\ 2 \end{pmatrix},\
\mu^- = \begin{pmatrix} 1 \\ 2 \\ 1 \end{pmatrix},
\end{gather*}
which is illustrated in Figure~\ref{fig:intro1}.
Take an initial vector $x^{(0)} = (1,2,0)^\top$. Then, the tangent digraph $\mathcal{G}(x^{(0)})$ is defined by the edge sets 
\begin{align*}
	E_1(x^{(0)}) &= \{ (u_1,1), (u_2,2), (u_3,3) \}, \\ E_2(x^{(0)}) &= \{(w_2,2),(w_2,3)\}, \\ E_3(x^{(0)}) &= \{(3,w_1),(1,w_2),(3,w_3),(2,w_4)\},
\end{align*}
see Figure~\ref{fig:tangent2} (left). 
Removing $e = (w_2,3) \in E_2(x^{(0)})$ from $\mathcal{G}(x^{(0)})$, we obtain a tree $\mathcal{T}(e,x^{(0)})$ with the edge set
\begin{align*}
	\{(u_1,1),(u_2,2),(w_2,2),(1,w_2),(2,w_4)\}.
\end{align*}
By setting $J := V(\mathcal{T}(e,x^{(0)})) = \{1,2\}$, we can verify that
\begin{align*}
	\mu(J,x^{(0)}) = (1+1) - (1+2) = -1 < 0,
\end{align*}
which means $J$ is a descent direction. We also see that $\delta(J,x^{(0)}) = 1$. Hence, we update the vector as
\begin{align*}
	x^{(1)} := x^{(0)} + \chi_J = (2,3,0)^\top.
\end{align*}
The tangent digraph $\mathcal{G}(x^{(1)})$ is defined by the edge sets
\begin{align*}
	E_1(x^{(1)}) &= \{ (u_1,1), (u_2,2), (u_3,3) \}, \\ E_2(x^{(1)}) &= \{(w_1,2),(w_2,2)\}, \\ E_3(x^{(1)}) &= \{(3,w_1),(1,w_2),(3,w_3),(2,w_4)\},
\end{align*}
see Figure~\ref{fig:tangent2} (right). We cannot obtain a feasible descent direction by removing any edges of $\mathcal{G}(x^{(1)})$.
Thus, $x^{(1)}$ is a local optimum of this TLFacP.
\end{example}

\begin{figure}
\centering
\includegraphics[width=0.85\textwidth]{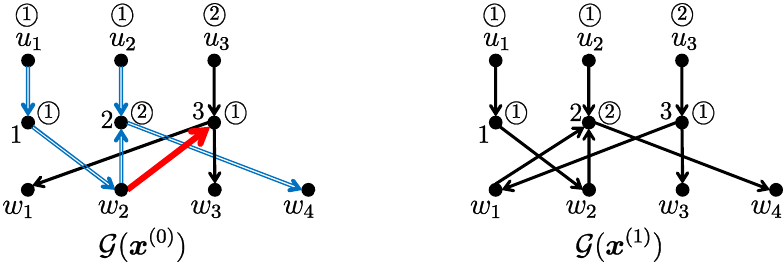}
\caption{Tangent digraphs $\mathcal{G}(x^{(0)})$ and $\mathcal{G}(x^{(1)})$. Numbers in circles indicate the values $\mu^+_k,\, k=1,2,3$ and $\mu^-_j,\,j=1,2,3$. The red bold arrow represents the edge removed from $\mathcal{G}(x^{(0)})$, and the blue double-lined arrows represent the edges of $\mathcal{T}(e,x^{(0)})$.}		\label{fig:tangent2}
\end{figure}

We analyze the computational complexity of \Cref{alg:base} with step size~\eqref{eq:stepsize} when the TLFacP \eqref{eq:objective}--\eqref{eq:const} is non-degenerate and \Cref{thm:tree} is adopted to find $J \in \mathscr{D}(x)$.
We observe that $\mathcal{G}(x)$ has $(m+n+p-1)$ edges when it is a spanning tree.
Starting from the leaves of the tree, we compute $\mu(\mathcal{T}(e,x))$ for all edges $e$ in $O(m+n+p)$ time.
Once we find $J \in \mathscr{D}(x)$, the step size $\delta(J,x)$ is computed in $O(r_A+r_C)$ time by evaluating the finite terms in~\eqref{eq:stepsize}, where $r_A$ and $r_C$ are the numbers of finite entries in $A$ and $C$, respectively.
Thus, the computational complexity of each iteration of lines 3--5 is at most $O(r_A+r_C)$.

\begin{remark}
The descent method proposed in this study can also be applied to a TLP because it is a special case of a TLFacP. The tropical simplex method in~\cite{Allamigeon2015} deals with non-degenerate TLP and has complexity $O(n(m+n))$ for each pivot step.
This complexity is not directly compared to our descent method applied to a TLP because a pivot step in the tropical simplex method may contain several descent steps.
\end{remark}

We further assume that all the finite entries of $A^+$, $A^-$, and $C$ are integers with maximum absolute value $M$. 
An initial solution $x \in S$ such that $\{j \in [n] : \mu^-_j > 0\} \subset \supp(x)$ can be found in at most $O((m+n)r_AM)$ time by solving the tropical linear system as a mean-payoff game~\cite{Brim2011}. After deleting the columns not contained in $\supp(x)$, this solution can be translated to a ``spanning tree'' solution as in \Cref{prop:connected}, which requires at most $(n-1)$ times computation of $\delta(J,x)$, resulting in at most $O(n(r_A+r_C))$ operations in total.
If $\mathcal{G}(x)$ is a spanning tree, then the range of the solution $x$, that is, $\max_{j \in [n]} x_j - \min_{j \in [n]} x_j$, is at most $2(n-1)M$ because any two vertices in $V$ are connected by at most $2(n-1)$ edges. Hence, the objective value $f(x)$ is estimated as
\begin{align*}
	f(x) \leq \sum_{k \in [p]} \mu^+_k (\max_{j \in [n]} c_{k,j} + \max_{j \in [n]} x_j) - \sum_{j \in [n]} \mu^-_j \min_{j' \in [n]} x_{j'}
	\leq (2n-1)MD.
\end{align*}
On the other hand, if a local optimum $x^*$ exists, then $f(x^*)$ is bounded as
\begin{align*}
	f(x^*) \geq \sum_{p \in [p]} \mu^+_k (\min_{j \in [n]} c_{k,j} + \min_{j \in [n]} x^*_j) - \sum_{j \in [n]} \mu^-_j \max_{j' \in [n]} x^*_{j'}
	\geq -(2n-1)MD
\end{align*}
because $\mathcal{G}(x^*)$ is also assumed to be a spanning tree.
Each iteration of lines 3--5 of \Cref{alg:base} decreases the value $f(x)$ by at least one if all the finite entries of $A^+$, $A^-$, and $C$ are integers. 
Hence, the algorithm finds a local optimum or detects that the problem is unbounded within $(4n-2)MD$ iterations.
Since each iteration can be done in $O(r_A+r_C)$ time, the total complexity is $O((m+n)(r_A+r_C)MD)$.

\section{Conclusions}

In this study, we consider a tropical linearly factorized programming (TLFacP).
We propose a combinatorial descent method to find a local optimum.
As any local optimum is represented by a connected tangent digraph, we can determine the global minimum by considering all such digraphs. Although the number of possible connected digraphs is finite, it rapidly grows as the size of the problem gets larger.
Hence, the descent method based on a local search is still useful.
The combinatorial descent algorithm proposed in this study is pseudo-polynomial time if the problem is non-degenerate and the finite entries of the input matrices are integers.
The computational complexity of this polynomial is not so large compared to that of the feasibility problem.
Future studies should focus on developing an effective algorithm for a degenerate TLFacP.

\section*{Acknowledgments}
This work is supported by JSPS KAKENHI Grant No.~22K13964.

\bibliographystyle{abbrv}
\bibliography{TropLFPref_arxiv}
\end{document}